\documentclass{amsart}

\usepackage[utf8]{inputenc}
\usepackage{hyperref}

\numberwithin{equation}{section}

\DeclareMathOperator{\Tr}{Tr}

\newtheorem{thm}{Theorem}
\numberwithin{thm}{section}

\newtheorem{lem}[thm]{Lemma}
\newtheorem{coro}[thm]{Corollary}

\theoremstyle{definition}

\newcommand{\inner}[3][]{\langle#2,#3\rangle_{#1}}

\newcommand{\norm}[2][]{\|{#2}\|_{{#1}}}
\newcommand{\Norm}[2][]{\Big\|{#2}\Big\|_{{#1}}}
\newcommand{\snorm}[2][]{|{#2}|_{{#1}}}

\newcommand{\abs}[1]{|#1|}
\newcommand{\Abs}[1]{\Big|#1\Big|}
\newcommand{\DD}{{\mathcal D}}

\newcommand{\EE}{{\mathbf E}}
\newcommand{\dd}{ { \mathrm{d}} }
\newcommand{\ee}{ { \mathrm{e}} }

\newcommand{\pt}{\partial}
\newcommand{\cT}{{ \mathcal T}}

\newcommand{\cF}{{ \mathcal F}}
\newcommand{\cD}{{ \mathcal D}}

\newcommand{\IR}{{ \mathbf R}}

\newcommand{\IP}{{ \mathbf P}}

\newcommand{\IE}{{ \mathbf E}}
\newcommand{\HS}{\mathrm{HS}}
\newcommand{\subsec}{\S~}

\title[Approximation of the Cahn-Hilliard-Cook equation] {Finite
  element approximation of the Cahn-Hilliard-Cook equation}

\author[M.~Kov\'acs]{Mih\'aly Kov\'acs$^1$}

\address{Department of Mathematics and Statistics,
  University of Otago, P.O.~Box 56, Dune\-din, New Zealand}

\email{mkovacs@maths.otago.ac.nz}
\urladdr{http://www.maths.otago.ac.nz/}

\author[S.~Larsson]{Stig Larsson$^{2}$}

\address{
  Department of Mathematical Sciences,
  Chalmers University of Technology and University of Gothenburg,
  SE--412 96 Gothenburg,
  Sweden}

\email{stig@chalmers.se}
\urladdr{http://www.math.chalmers.se/~stig}

\author[A.~Mesforush]{Ali Mesforush}

\address{
  School of Mathematical Sciences,
  Shahrood University of Technology,
  Shahrood, Iran}

\email{ali.mesforush@alumni.chalmers.se}
\urladdr{http://shahroodut.ac.ir/as/?id=S138}

\thanks{$^1$Supported by a University of Otago Research Grant (UORG)}
\thanks{$^2$Supported by the Swedish Research Council (VR) and by the
  Swedish Foundation for Strategic Research (SSF) through GMMC, the
  Gothenburg Mathematical Modelling Centre.}

\keywords{Cahn-Hilliard-Cook equation, additive noise, Wiener process,
  existence, regularity, finite element, error estimate, strong convergence}

\subjclass[2000]{65M60, 60H15, 60H35, 65C30}
\date{\today}

\begin{document}

\begin{abstract}
  We study the nonlinear stochastic Cahn-Hilliard equation perturbed by
  additive colored noise.  We show almost sure existence and
  regularity of solutions.  We introduce spatial approximation by a
  standard finite element method and prove error estimates of optimal
  order on sets of probability arbitrarily close to $1$.  We also
  prove strong convergence without known rate.
\end{abstract}

\maketitle

\section{Introduction}   \label{chc2:introduction}

We study the Cahn-Hilliard equation perturbed by noise, also known as
the Cahn-Hilliard-Cook equation (cf.~\cite{Dirk1,cook}),
\begin{equation*}
\begin{aligned}
& \dd u - \Delta w \,\dd t
= \dd W && \text{in } \ \cD\times (0,T],\\
& w =- \Delta u +f(u)   && \text{in } \ \cD\times (0,T],\\
& \frac{\pt u}{\pt n}
= \frac{\pt w}{\pt n} = 0  && \text{on } \ \pt \cD\times (0,T],\\
& u(0) = u_0  && \text{in } \ \cD.
\end{aligned}
\end{equation*}
Here $\cD$ is a bounded domain in $\IR^d$, $d=1,2,3$, and
$f(s)=s^3-s$.  Using the framework of \cite{DaPratoZabczyk} we write
this as an abstract evolution equation of the form
\begin{equation}\label{C-H-C-X-1}
\dd X + \big( A^2 X + Af(X) \big)\,\dd t = \dd W,\quad t\in (0,T];
\quad X(0) = X_0,
\end{equation}
where $A$ denotes the Neumann Laplacian considered as an unbounded
operator in the Hilbert space $H=L_2(\cD)$ and $W$ is a $Q$-Wiener
process in $H$ with respect to a filtered probability space
$(\Omega,\cF,\mathbf{P},\lbrace\cF_t\rbrace_{t \ge 0})$. We also write
$H^s=H^s(\cD)$ for the standard Sobolev spaces. See Section
\ref{chc2:preliminaries} for details.

Our goal is to study the convergence properties of the spatially
semidiscrete finite element approximation $X_h$ of $X$, which is
defined by an equation of the form
\begin{equation*}
\dd X_h + \big( A_h^2 X_h + A_h P_h f(X_h) \big)\,\dd t
= P_h \,\dd W,\quad t\in (0,T];\quad X_h(0) = P_h X_0.
\end{equation*}
In order to do so, we need to prove existence and regularity for
solutions of \eqref{C-H-C-X-1}.  Such results were first proved in
\cite{Prato}.  Under the assumption that the covariance operator $Q=I$
(space-time white noise, cylindrical noise) it was shown that there is
a process which belongs to $C([0,T],H^{-1})$ almost surely and
which is the unique solution of \eqref{C-H-C-X-1}.  Under the stronger
assumption that $A$ and $Q$ commute and that
$\Tr(A^{\delta-1}Q)<\infty$ for some $\delta>0$ (colored noise) it was
shown that the solution belongs to $C([0,T],H)$ almost surely.  Such
regularity is insufficient for proving convergence of a numerical
solution.  Our first aim is therefore to prove existence of a solution
in $C([0,T],H^{\beta})$ almost surely for some $\beta>0$.

Following the semigroup approach of \cite{DaPratoZabczyk} we write
the equation \eqref{C-H-C-X-1} as the integral equation (mild
solution)
\begin{align*}
  \begin{split}
X(t) &=  \ee^{-tA^2}X_0 - \int_0^t A\ee^{-(t-s)A^2}f(X(s))\,\dd s
+ \int_0^t \ee^{-(t-s)A^2}\,\dd W(s) \\& = Y(t) + W_A(t),
\end{split}
\end{align*}
where $\ee^{-tA^2}$ is the analytic semigroup generated by $-A^2$
(see Corollary \ref{mildxy}).
This naturally splits the solution as $X=Y+W_A$, where $W_A(t)=
\int_0^t \ee^{-(t - s)A^2}\,\dd W(s)$ is a stochastic convolution.
This convolution, and its finite element approximation, was studied in
\cite{Ali}.  In particular, it was shown there that if
$\norm[\HS]{A^{\frac{\beta-2}{2}}Q^{\frac{1}{2}}}^2<\infty$
for some $\beta\ge0$, then we have regularity of order $\beta$ in a
mean square sense; that is,
\begin{align}  \label{chc2:eq1}
  \EE\big[ \norm[H^\beta]{W_A(t)}^2\big]
  \le \norm[\HS]{A^{\frac{\beta-2}{2}}Q^{\frac{1}{2}}}^2 , \quad t\ge 0.
\end{align}
The other part, $Y$, solves a differential equation with random coefficient,
\begin{equation}\label{tran-Cahn-Hill}
 \dot{Y} + A^2 Y + A f(Y+W_A) = 0,\quad t >0; \quad
 Y(0) =X_0.
\end{equation}
This can be solved once $W_A$ is known.  This approach was also used
in \cite{Prato}, but while they used Galerkin's method and energy
estimates to solve \eqref{tran-Cahn-Hill}, we use a semigroup approach
similar to that of \cite{Stig}.  However, published results for the
deterministic Cahn-Hilliard equation do not apply directly due to the
limited regularity in \eqref{tran-Cahn-Hill}.

The nonlinear term is only locally Lipschitz and we need to control
the Lipschitz constant.  In the deterministic case studied in
\cite{Stig} this is achieved by the Lyapunov functional
\begin{equation*}
J(u) = \frac{1}{2} \norm{\nabla u}^2 + \int_{\cD} F(u) \, \dd x,
\quad u \in H^1; \quad F(s)=\tfrac14 s^4-\tfrac12 s^2,
\end{equation*}
which is nonincreasing along paths, so that $\norm[H^1]{X(t)}\le C$
for $t\ge0$.  Due to the stochastic perturbation, this is not true for
the stochastic equation \eqref{C-H-C-X-1}.  However, it is possible
find a bound for the growth of the expected value of $J(X(t))$,
\begin{align}\label{chc2:eq2}
  \IE[J(X(t))]\le C(t), \quad t\ge0.
\end{align}
This was shown in \cite{Prato} under the assumption that
$A$ and $Q$ commute and
\begin{equation}\label{chc:2AQ}
\Tr(AQ)<\infty,
\end{equation}
which is consistent with $\beta=3$ in \eqref{chc2:eq1}, since
$\norm[\HS]{A^{\frac12}Q^{\frac12}}^2=\Tr(AQ)$ in this case.  (More
generally: if $AQ$ is nuclear, then
$\|A^{\frac{1}{2}}Q^{\frac{1}{2}}\|_{\HS}^2=\Tr(AQ)$, see
\cite[Theorem 2.1]{KLLweak}.)  We repeat this in
Theorem~\ref{lyap-thm} with several improvements.  First of all we
reduce the growth of the bound from exponential to quadratic with
respect to $t$.  We also relax the assumptions: we do not assume that
$A$ and $Q$ commute; that is, have a common eigenbasis, and we do not
assume that the eigenbasis of $Q$ consists of bounded
functions. Moreover, we prove the same bound for the finite element
solution $X_h$.  Even if $A$ and $Q$ commute, this will not be true
for the corresponding finite element approximations $A_h$ and $Q_h$,
so the relaxation of this assumption is necessary for the proof of the
bound for $X_h$.

In Corollary \ref{coro2.5} we improve \eqref{chc2:eq2} to a uniform
norm bound
\begin{align*}
 \EE\Big[ \sup_{s\in[0,T]}\big(\norm[H^1]{X(s)}^2
  +\norm[H^1]{X_h(s)}^2\big)\Big]\le K_T.
\end{align*}
By means of Chebyshev's inequality we may then show that, for each
$T>0$ and $\epsilon\in(0,1)$, there are $K_T$ and
$\Omega_\epsilon\subset \Omega$ with $\IP(\Omega_\epsilon)\ge
1-\epsilon$ and such that
\begin{align*}
    \norm[H^1]{X(t)}^2+\norm[H^1]{X_h(t)}^2\le \epsilon^{-1}K_T
  \quad \text{on $\Omega_\epsilon$}, \ t\in[0,T].
\end{align*}
This bound controls the Lipschitz constant of the nonlinear term and
we show that $X\in C([0,T],H^{3})$ for $\omega\in \Omega_\epsilon$
under the assumption $\norm[\HS]{A^{\frac12}Q^{\frac12}}<\infty$,
which is consistent with \eqref{chc:2AQ} (see Theorem \ref{Ynorm3}).  We
also obtain an error estimate (see Theorem \ref{thm5.3})
\begin{align*}
     \norm{X_h(t)-X(t)}\le C(\epsilon^{-1}K_T,T) h^2|\log(h)|
 \quad \text{on $\Omega_\epsilon$}, \ t\in[0,T].
\end{align*}
The constant grows rapidly with $\epsilon^{-1}K_T$, but nevertheless
we may use this to show strong convergence (see Theorem \ref{lastthm}),
\begin{align*}
     \max_{t\in[0,T]}\EE\big[\norm{X_h(t)-X(t)}^2\big]\to 0
 \quad  \text{as $h\to0$}.
\end{align*}
To prove strong convergence with an estimate of the rate remains a
challenge for future work.  In this connection we note that even for
numerical methods for stochastic ordinary differential equations with
local Lipschitz nonlinearity there are few results on convergence
rates (cf.~\cite{HighamMaoStuart02}).

Numerical methods for the deterministic Cahn-Hilliard equation are
well covered in the literature.  There are few studies of numerical
methods for the Cahn-Hilliard-Cook equation. We are only aware of
\cite{Weber} in which convergence in probability was proved for a
difference scheme for the nonlinear equation in multiple dimensions.
For the linear equation there is \cite{KossiorisZouraris}, where
strong convergence estimates were proved for the finite element method
for the linear equation in 1-D, and the already mentioned work
\cite{Ali} on the finite element method for the stochastic convolution
in multiple dimensions.

\section{Preliminaries} \label{chc2:preliminaries}
\subsection{Norms}\label{norms}
Let $\cD \subset \IR^d$, $d =1,2,3$, be a bounded convex domain with
polygonal boundary $\pt \cD$.  Let $H = L_2(\cD)$ with standard inner
product $\inner{\cdot}{\cdot}$ and norm $\norm{\cdot}$, and
\begin{equation*}
\dot{H} = \Big \lbrace v \in H : \int_\cD v \,\dd x = 0 \Big \rbrace.
\end{equation*}
Let $P \colon H \to \dot{H}$ define the orthogonal projector. Then
\begin{equation*}
(I-P)v = \abs{\cD}^{-1}\int_\cD v \,\dd x,
\end{equation*}
is the average of $v$.  We also denote by $H^k = H^k(\cD)$ the
standard Sobolev space. We define $A = -\Delta$ with domain of
definition
\begin{equation*}
D(A) = \Big \lbrace v \in H^2: \frac{\pt v}{\pt n} = 0\
\text{on}\ \pt \cD \Big \rbrace.
\end{equation*}
Then $A$ is a positive definite, selfadjoint, unbounded, linear
operator on $\dot{H}$ with compact inverse.  When extended to $H$ as
$Av=APv$ it has an orthonormal eigenbasis $\lbrace \varphi_j
\rbrace_{j=0}^\infty$ with corresponding eigenvalues $\lbrace
\lambda_j \rbrace_{j=0}^\infty$ such that
\begin{equation*}
0 = \lambda_0 < \lambda_1 \le \lambda_2 \le \cdots \le \lambda_j
\le \cdots ,
\quad \lambda_j \to \infty.
\end{equation*}
The first eigenfunction is constant, $\varphi_0 = \abs{\cD}^{-\frac{1}{2}}$.

We define seminorms and norms
\begin{align}
\label{eq:dotnorm}
& \snorm[\alpha]{v} = \Big( \sum_{j=1}^\infty
\lambda_j^\alpha \abs{\inner{v}{\varphi_j}}^2 \Big)^{\frac{1}{2}},
\quad \alpha \in\IR,\\
\label{eq:nondotnorm}
& \norm[\alpha]{v} = \big( \snorm[\alpha]{v}^2 +
  \abs{\inner{v}{\varphi_0}}^2\big)^{\frac{1}{2}} ,
\quad \alpha \in\IR,
\end{align}
and corresponding spaces
\begin{equation*}
\dot{H}^\alpha = D(A^{\frac{\alpha}{2}})
= \Big\lbrace v\in \dot{H}: \snorm[\alpha]{v} < \infty \Big\rbrace,
\quad H^\alpha = \Big \lbrace v\in H: \norm[\alpha]{v} < \infty \Big \rbrace.
\end{equation*}
For integer order $\alpha=k\ge0$, $H^k$ coincides with the standard Sobolev spaces with
$\norm[k]{\cdot}$ equivalent to the standard norm $\norm[H^k]{\cdot}$. For example,
\begin{equation}  \label{xx}
\norm[1]{v}^2 = \snorm[1]{v}^2 + \abs{\inner{v}{\varphi_0}}^2
=\norm{\nabla v}^2 + \abs{\inner{v}{\varphi_0}}^2
\end{equation}
is equivalent to the standard norm $\norm[H^1]{v}^2$ by the Poincar\'e
inequality.

\subsection{The semigroup}
The operator $-A^2$ is the infinitesimal generator of an analytic
semigroup $\ee^{-tA^2}$ on $H$,
\begin{equation*}
\begin{split}
 \ee^{-tA^2} v
& = \sum_{j=0}^\infty \ee^{-t \lambda_j^2}\inner{v}{\varphi_j}\varphi_j
= \sum_{j=1}^\infty \ee^{-t \lambda_j^2}\inner{v}{\varphi_j}\varphi_j
 + \inner{v}{\varphi_0}\varphi_0\\
& = \ee^{-tA^2}Pv + (I-P)v.
\end{split}
\end{equation*}
The analyticity implies that
\begin{equation}\label{eq:analytic}
\norm{A^\alpha \ee^{-tA^2}v} \le Ct^{-\frac{\alpha}{2}} \ee^{-ct}
\norm{v},\quad v\in H,\ \alpha > 0.
\end{equation}

\subsection{The finite element method}  \label{subsec:fem}
Let $\lbrace \cT_h \rbrace_{h>0}$ denote a family of regular
triangulations of $\cD$ with maximal mesh size $h$.  Let $S_h$ be the
space of continuous functions on $\cD$, which are piecewise
polynomials of degree $\le1$ with respect to $\cT_h$. Hence, $S_h
\subset H^1$. We also define $\dot{S}_h = PS_h$; that is,
\begin{equation*}
\dot{S}_h = \Big \lbrace v_h \in S_h: \int_\cD v_h \,\dd x = 0 \Big \rbrace.
\end{equation*}
The space $\dot{S}_h$ is introduced only for the purpose of theory but not
for computation. Now we define the ''discrete Laplacian'' $A_h \colon
S_h \to \dot{S}_h$ by
\begin{equation*}
\inner{A_h v_h}{w_h} = \inner{\nabla v_h}{\nabla w_h},
\quad \forall v_h \in S_h,\, w_h \in \dot{S}_h.
\end{equation*}
We note that
\begin{equation}\label{chc2:a1}
\snorm[1]{v_h} = \norm{A^{\frac{1}{2}} v_h}
= \norm{\nabla v_h} = \norm{A_h^{\frac{1}{2}}v_h}, \quad v_h \in S_h.
\end{equation}
The operator $A_h$ is selfadjoint, positive definite on $\dot{S}_h$,
positive semidefinite on $S_h$, and $A_h$ has an orthonormal
eigenbasis $\lbrace \varphi_{h,j} \rbrace_{j=0}^{N_h}$ with
corresponding eigenvalues $\lbrace \lambda_{h,j} \rbrace_{j=0}^{N_h}$.
We have
\begin{equation*}
0 = \lambda_{h,0} < \lambda_{h,1} \le \cdots \le \lambda_{h,j} \le
\cdots \le \lambda_{h,N_h},
\end{equation*}
and $\varphi_{h,0} = \varphi_0 = \abs{\cD}^{-\frac{1}{2}}$.
Moreover, we define $\ee^{-tA_h^2} \colon S_h \to S_h$ by
  \begin{equation*}
\begin{split}
  \ee^{-t A_h^2} v_h
  = \sum_{j=0}^{N_h} \ee^{-t \lambda_{h,j}}
   \inner{v_h}{\varphi_{h,j}}\varphi_{h,j}
  = \sum_{j=1}^{N_h} \ee^{-t\lambda_{h,j}}
   \inner{v_h}{\varphi_{h,j}}\varphi_{h,j}
   + \inner{v_h}{\varphi_0}\varphi_{0},
  \end{split}
\end{equation*}
and the orthogonal projector $P_h \colon H \to S_h$ by
\begin{equation}\label{P_h}
\inner{P_h v}{w_h} = \inner{v}{w_h}\quad \forall v \in H,\, w_h \in S_h.
\end{equation}
Clearly, $P_h \colon \dot{H} \to \dot{S}_h$ and
\begin{equation*}
\ee^{-t A_h^2}P_h v = \ee^{-t A_h^2}P_h P v + (I-P)v.
\end{equation*}
We have a discrete analog of \eqref{eq:analytic},
\begin{equation}\label{eq:analytich}
\norm{A_h^\alpha\ee^{-tA_h^2}v_h} \le Ct^{-\frac{\alpha}{2}} \ee^{-ct}
\norm{v_h},\quad v_h\in S_h,\ \alpha>0.
\end{equation}

Finally, we define the Ritz projector $R_h \colon \dot{H}^1 \to \dot{S}_h$ by
\begin{align*}
\inner{\nabla R_h v}{\nabla w_h} = \inner{\nabla v}{\nabla w_h},
\quad \forall v \in \dot{H}^1,\, w_h \in \dot{S}_h.
\end{align*}
We extend it to $R_h \colon {H}^1 \to {S}_h$ by
\begin{align}\label{R_h}
  R_hv=R_hPv+(I-P)v,  \quad v \in {H}^1.
\end{align}
We then have the following bound for $R_hv-v=(R_h-I)Pv$ (cf.\
\cite[Ch.~1]{Vidar})
\begin{equation}\label{chc2:Rherror}
\norm{R_h v -v} \le Ch^{\beta} \snorm[\beta]{v},
\quad v \in {H}^\beta, \ \beta \in [1,2].
\end{equation}
In order to simplify the presentation, we assume that $P_h$ is bounded
with respect to the $H^1$ and $L_4$ norms, and that we have an inverse
bound for $A_h$,
\begin{equation}\label{x}
\begin{aligned}
& \norm[1]{P_h v} \le C\norm[1]{v},&& v \in {H}^1,\\
& \norm[L_4]{P_h v} \le C \norm[L_4]{v}, && v \in L_4(\DD), \\
& \norm{A_h v_h} \le C h^{-2}\norm{v_h}, && v_h \in S_h.
\end{aligned}
\end{equation}
This holds, for example, if the mesh family $\lbrace \cT_h \rbrace_{h > 0}$ is quasi-uniform.
\subsection{The Wiener process}
We recall the definitions of the trace and the Hilbert-Schmidt norm of
a linear operator $T$ on $H$:
\begin{equation} \label{eq:defTr}
\Tr(T) = \sum_{k=1}^\infty \inner{T f_k}{f_k},\quad
 \norm[\HS]{T} = \Big( \sum_{k=1}^\infty \norm{Tf_k}^2 \Big)^{\frac{1}{2}},
\end{equation}
where $\lbrace f_k \rbrace_{k=1}^\infty$ is an arbitrary orthonormal basis of $H$.

Let $(\Omega,\cF,\IP)$ be a probability space.
Let $Q$ be a selfadjoint, positive semidefinite, bounded, linear
operator on $H$ with $\Tr(Q) < \infty$. Let $\lbrace e_k \rbrace_{k
  =1}^\infty$ be an orthonormal eigenbasis for $Q$ with eigenvalues
$\lbrace \gamma_k \rbrace_{k=1}^\infty$. Then we define the $Q$-Wiener
process
\begin{equation*}
W(t) = \sum_{k=1}^\infty \gamma_k^{\frac{1}{2}} \beta_k(t) e_k,
\end{equation*}
where the $\beta_k$ are real-valued, independent Brownian motions. The
series converges in $L_2(\Omega,H)$; that is, with respect to the norm
$\norm[L_2(\Omega,H)]{v}=(\EE[\norm{v}^2])^{\frac12}$. The process $W$
generates a filtration $\{\cF_t\}_{t\ge0}$ so that it becomes a square
integrable martingale and so that we can integrate with respect to
$W$. In the sequel we work in the resulting filtered probabality space
$(\Omega,\cF,\mathbf{P},\lbrace\cF_t\rbrace_{t \ge 0})$. We refer to
\cite{DaPratoZabczyk} for the details. The $Q$-Wiener process can be
defined also when the covariance operator has infinite trace but this
is not needed in the present work.
\subsection{The stochastic convolution}
We now define (cf.~\cite{DaPratoZabczyk})
\begin{equation}  \label{convA}
  \begin{split}
W_A(t)
&= \int_0^t \ee^{-(t-s)A^2} \, \dd W(s)
\\&
= \int_0^t \ee^{-(t-s)A^2} P\, \dd W(s)
+ \int_0^t \inner{\dd W(s)}{\varphi_0}\varphi_0
\\&
= \int_0^t \ee^{-(t-s)A^2} P\, \dd W(s)
+ \inner{W(t)}{\varphi_0}\varphi_0
\\&
= \int_0^t \ee^{-(t-s)A^2} P\, \dd W(s)
+ (I-P)W(t).
\end{split}
\end{equation}
Similarly,
\begin{equation} \label{convAh}
\begin{split}
W_{A_h}(t)
& = \int_0^t \ee^{-(t-s)A_h^2} P_h\, \dd W(s)\\
& = \int_0^t \ee^{-(t-s)A_h^2} P_h P\, \dd W(s)
+ \inner{W(t) } {\varphi_0}\varphi_0\\
& = \int_0^t \ee^{-(t-s)A_h^2} P_h P\, \dd W(s)
+ (I-P)W(t).
\end{split}
\end{equation}
Hence, the constant eigenmodes cancel:
\begin{equation} \label{chc2:cancel}
W_{A_h}(t) - W_A(t)
= \int_0^t \big( \ee^{-(t-s)A_h^2}P_h - \ee^{-(t-s)A^2}\big)P\,\dd W(s).
\end{equation}

These convolutions were studied in \cite{Ali}. We quote the following
results from there. We use the norms
\begin{align*}
  \norm[L_2(\Omega,\dot{H}^\beta)]{v}=\big(\EE\big[\,\snorm[\beta]{v}^2\,\big]\big)^{\frac12}.
\end{align*}
\begin{thm}\label{Thm3.1}
  If $\norm[\HS]{A^{\frac{\beta - 2}{2}} Q^{\frac{1}{2}}} < \infty$
  for some $\beta \ge 2$, then
 \begin{equation*}
  \norm[L_2(\Omega , \dot{H}^{\beta})]{W_A(t)}
\le C\norm[\HS]{A^{\frac{\beta - 2}{2}} Q^{\frac{1}{2}}}, \quad t \ge 0.
 \end{equation*}
\end{thm}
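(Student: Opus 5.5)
We use the Itô isometry together with the spectral representation of the semigroup. The norm $\snorm[\beta]{\cdot}$ only sees the component $PW_A(t)$. By \eqref{convA} we have
\begin{equation*}
PW_A(t)=\int_0^t \ee^{-(t-s)A^2}P\,\dd W(s),
\end{equation*}
and $\snorm[\beta]{v}=\norm{A^{\frac{\beta}{2}}v}$ for $v\in\dot H^\beta$. The Itô isometry for stochastic integrals against a $Q$-Wiener process (cf.~\cite{DaPratoZabczyk}) then gives
\begin{equation*}
\EE\big[\snorm[\beta]{W_A(t)}^2\big]
=\int_0^t \norm[\HS]{A^{\frac{\beta}{2}}\ee^{-sA^2}PQ^{\frac12}}^2\,\dd s .
\end{equation*}
The task is therefore to bound this deterministic integral by $C\norm[\HS]{A^{\frac{\beta-2}{2}}Q^{\frac12}}^2$.

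Next we split the operator as
\begin{equation*}
A^{\frac{\beta}{2}}\ee^{-sA^2}P=\big(A\ee^{-sA^2}P\big)\,A^{\frac{\beta-2}{2}},
\end{equation*}
which is legitimate on $\dot H$ since all these operators are functions of $A$ and commute. We do not use an operator-norm bound $\norm{A\ee^{-sA^2}}\le Cs^{-1/2}$ here, because its square $s^{-1}$ is not integrable. Instead we compute the Hilbert--Schmidt norm in the eigenbasis of $A$, using $\norm[\HS]{T}=\norm[\HS]{T^*}$. With $\{\varphi_j\}_{j\ge1}$ as basis,
\begin{equation*}
\norm[\HS]{A^{\frac{\beta}{2}}\ee^{-sA^2}PQ^{\frac12}}^2
=\sum_{j=1}^\infty \lambda_j^{2}\ee^{-2s\lambda_j^2}\,\lambda_j^{\beta-2}\norm{Q^{\frac12}\varphi_j}^2 .
\end{equation*}
By Tonelli we may integrate term by term in $s$. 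Since $\int_0^t\lambda_j^2\ee^{-2s\lambda_j^2}\,\dd s\le\frac12$, we obtain
\begin{equation*}
\EE\big[\snorm[\beta]{W_A(t)}^2\big]\le \tfrac12\sum_{j\ge1}\lambda_j^{\beta-2}\norm{Q^{\frac12}\varphi_j}^2
=\tfrac12\norm[\HS]{A^{\frac{\beta-2}{2}}Q^{\frac12}}^2,
\end{equation*}
where the last identity holds because the sum is exactly $\norm[\HS]{(A^{\frac{\beta-2}{2}}Q^{\frac12})^*}^2$ computed in the basis $\{\varphi_j\}$ (the $j=0$ term vanishes since $A^{\frac{\beta-2}{2}}$ acts through $P$). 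This bound is uniform in $t$, so the estimate holds for all $t\ge0$ with $C=2^{-1/2}$.

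The only delicate step is the one just described: the integrability in $s$ must come from the exact eigenvalue integral $\int_0^\infty\lambda^2\ee^{-2s\lambda^2}\dd s=\frac12$, computed mode by mode, rather than from the analytic-semigroup estimate \eqref{eq:analytic}. We also need to justify the adjoint manipulation when $A^{\frac{\beta-2}{2}}$ is unbounded. We do this by working on finite sums (Galerkin truncation in $j$) and letting $N\to\infty$ by monotone convergence.
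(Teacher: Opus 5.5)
The paper gives no proof of this theorem (it is quoted from \cite{Ali}); your argument is correct and is the standard one for this kind of estimate. The It\^o isometry, justified mode by mode via truncation and monotone convergence, gives $\EE[\snorm[\beta]{W_A(t)}^2]=\frac12\sum_{j\ge1}\lambda_j^{\beta-2}(1-\ee^{-2t\lambda_j^2})\norm{Q^{\frac12}\varphi_j}^2$, and the exact integrated smoothing bound $\int_0^t\lambda_j^2\ee^{-2s\lambda_j^2}\,\dd s\le\frac12$ (the spectral form of $\int_0^t\norm{A\ee^{-sA^2}v}^2\,\dd s\le\frac12\norm{v}^2$) then yields the result with $C=2^{-1/2}$. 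One small nit: for $\beta=2$, if $A^0$ is read as $I$, the $j=0$ term $\norm{Q^{\frac12}\varphi_0}^2$ need not vanish, so your final identification with $\norm[\HS]{A^{\frac{\beta-2}{2}}Q^{\frac12}}^2$ is an inequality rather than an equality, which is all the statement requires.
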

\begin{thm}\label{Thm3.2}
If  $\norm[\HS]{Q^{\frac{1}{2}}} < \infty$, then
\begin{equation*}
 \norm[L_2(\Omega , H)]{W_{A_h}(t) - W_A(t)}
   \le Ch^{2}  \abs{\log h}\norm[\HS]{Q^{\frac{1}{2}}}, \quad t \ge 0.
\end{equation*}
\end{thm}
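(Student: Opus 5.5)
Since the constant modes cancel by \eqref{chc2:cancel}, I will bound
\begin{equation*}
\EE\big[\norm{W_{A_h}(t)-W_A(t)}^2\big]
=\int_0^t \Norm[\HS]{F_h(s)PQ^{\frac12}}^2\,\dd s,
\qquad F_h(s)=\ee^{-sA_h^2}P_h-\ee^{-sA^2},
\end{equation*}
using the It\^o isometry after the change of variables $s\mapsto t-s$. Expanding the Hilbert--Schmidt norm in the eigenbasis $\{e_k\}$ of $Q$ gives
\begin{equation*}
\EE\big[\norm{W_{A_h}(t)-W_A(t)}^2\big]
=\sum_{k}\gamma_k\int_0^t \norm{F_h(s)Pe_k}^2\,\dd s .
\end{equation*}
Because $\sum_k\gamma_k=\norm[\HS]{Q^{\frac12}}^2$, the whole theorem reduces to the deterministic estimate
\begin{equation*}
\int_0^\infty \norm{F_h(s)v}^2\,\dd s\le Ch^4\abs{\log h}^2\norm{v}^2,\qquad v\in\dot H .
\end{equation*}
This is a square-integrated-in-time error bound for the finite element approximation of the homogeneous linearized Cahn--Hilliard equation $\dot u+A^2u=0$ with nonsmooth $L_2$ initial data. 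It is uniform in $t$ because of the exponential decay of both semigroups on $\dot H$.

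To prove the deterministic estimate I will use the standard nonsmooth-data error analysis for the fourth-order parabolic problem, as in Thom\'ee's book adapted to $A^2$. Write $e(s)=u_h(s)-u(s)$ with $u=\ee^{-sA^2}v$ and $u_h=\ee^{-sA_h^2}P_hv$. Split $e=(u_h-R_hu)+(R_hu-u)=\theta+\rho$; with mixed formulations one may instead introduce the projection of $w=Au$. The term $\rho$ is handled by \eqref{chc2:Rherror} and \eqref{eq:analytic}:
\begin{equation*}
\norm{\rho(s)}\le Ch^2\norm{Au(s)}\le Ch^2 s^{-\frac12}\ee^{-cs}\norm{v}.
\end{equation*}
Squaring gives an integrand of size $h^4s^{-1}$, which is not integrable at $s=0$. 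I will therefore cut off at $s=h^4$. On $(0,h^4)$ I use the trivial stability bound $\norm{e(s)}\le 2\norm{v}$, which contributes $Ch^4\norm{v}^2$. On $(h^4,\infty)$ the bound $h^4\int_{h^4}^\infty s^{-1}\ee^{-cs}\,\dd s\le Ch^4\abs{\log h}$ produces the logarithm; an extra $\abs{\log h}$ factor appears from the estimate for $\theta$ below.

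For $\theta$, the error equation is
\begin{equation*}
\dot\theta+A_h^2\theta=-(P_h-R_h)\dot u
\end{equation*}
(up to terms involving $A_hR_h-P_hA$, which require the discrete identity $A_hR_h=P_hA$ on suitable functions). I will treat it with the energy/parabolic duality technique. Since $\dot u=-A^2u$, the right-hand side satisfies $\norm{(P_h-R_h)\dot u(s)}\le Ch^2\norm{A^3u(s)}^{\ldots}$, which is too singular. So I will use the smoothing of the discrete semigroup \eqref{eq:analytich} through the variation-of-constants formula
\begin{equation*}
\theta(s)=\ee^{-sA_h^2}\theta(0)-\int_0^s \ee^{-(s-r)A_h^2}(P_h-R_h)\dot u(r)\,\dd r.
\end{equation*}
I then split the integral at $r=s/2$ and integrate by parts in $r$ on the first half to move the time derivative onto the discrete semigroup. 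This yields
\begin{equation*}
\norm{\theta(s)}\le Ch^2 s^{-\frac12}\big(1+\abs{\log(s/h^4)}\big)\ee^{-cs}\norm{v}
\end{equation*}
for $s\ge h^4$. Integrating its square gives at most $Ch^4\abs{\log h}^{2}\norm{v}^2$, where the powers of $\log$ must be tracked carefully to get exactly $\abs{\log h}^2$ after taking the square root.

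I expect the main obstacle to be this nonsmooth-data bound for $\theta$ in the fourth-order setting. In particular, handling the commutation between $A_h$, $P_h$ and $R_h$ on $\dot S_h$ is delicate, and the inverse estimate \eqref{x} may be needed on the short interval near $s=h^4$ to keep the singularity at $s=0$ from costing more than a logarithm. If the direct approach becomes messy, the fallback is a spectral or duality argument: test the error equation with $A_h^{-1}\theta$ and then $A_h^{-2}\theta$ in weighted energy estimates with weights $s$, $s^2$, which is the approach I would try first in practice.
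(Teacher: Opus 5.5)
\textbf{This is a genuine gap: the reduction is fine, but the key deterministic bound is unproved and your sketch of it fails as written.} The paper gives no proof of this statement; it quotes it from \cite{Ali}. Your reduction is correct and standard: by \eqref{chc2:cancel} and the It\^o isometry, the theorem follows from the deterministic bound $\int_0^\infty\norm{F_h(s)v}^2\,\dd s\le Ch^4\abs{\log h}^2\norm{v}^2$ for $v\in\dot H$, where $F_h(s)=\ee^{-sA_h^2}P_h-\ee^{-sA^2}$. That bound is the entire content of the theorem, and your sketch has three concrete defects.

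(i) With $\theta=u_h-R_hu$ you need $\theta(0)=P_hv-R_hv$. This is undefined for $v\in\dot H$, because $R_h$ requires $H^1$. Use $\theta=u_h-P_hu$, so that $\theta(0)=0$.

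(ii) Your error equation is the heat-equation one. For $A^2$, the identity $A_hR_h=P_hA$ absorbs only one factor of $A$. It leaves the term $A_hP_h(R_h-I)Au$, which does not vanish and is of the same order as the rest. With $\theta=u_h-P_hu$, the correct equation (as in the proof of Theorem~\ref{Athm1} with $f=0$) is
\begin{equation*}
\dot\theta+A_h^2\theta=A_hP_h(R_h-I)Au-A_h^2P_h(I-R_h)u,\qquad \theta(0)=0 .
\end{equation*}

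(iii) The logarithms are miscounted. Even granting $\norm{\theta(s)}\le Ch^2s^{-1/2}(1+\abs{\log(s/h^4)})\norm{v}$, you get
$h^4\int_{h^4}^1 s^{-1}(1+\log(s/h^4))^2\,\dd s=\tfrac13 h^4\big((1+4\abs{\log h})^3-1\big)$.
After the square root this gives only $h^2\abs{\log h}^{3/2}$, not $h^2\abs{\log h}$.

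\textbf{How to close it.} Prove the log-free nonsmooth-data bound $\norm{F_h(s)v}\le Ch^2s^{-1/2}\ee^{-cs}\norm{v}$. Combined with $\norm{F_h(s)v}\le 2\norm{v}$ on $(0,h^4)$, it gives $\int_0^\infty\norm{F_h(s)v}^2\,\dd s\le Ch^4(1+\abs{\log h})\norm{v}^2$, which is more than enough. By \eqref{chc2:Rherror} and \eqref{eq:analytic}, $\norm{P_hu-u}\le Ch^2\norm{Au}\le Ch^2s^{-1/2}\ee^{-cs}\norm{v}$, so only $\theta$ remains. Write $\theta$ by variation of constants and split the integral at $r=s/2$.

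For the first term on $[s/2,s]$, use $\norm{A_h\ee^{-\tau A_h^2}}\le C\tau^{-1/2}$ and $\norm{(R_h-I)Au(r)}\le Ch^2\norm{A^2u(r)}\le Ch^2r^{-1}\norm{v}$. On $[0,s/2]$, write $Au=-A^{-1}\dot u$ and integrate by parts. Then use $\norm{(R_h-I)A^{-1}u}\le Ch^2\norm{u}$ and $\norm{A_h^3\ee^{-\tau A_h^2}}\le C\tau^{-3/2}$. The boundary term at $r=0$ is harmless since $A^{-1}v\in\dot H^2$.

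For the second term on $[0,s/2]$, the direct bound $C(s-r)^{-1}h^2r^{-1/2}\norm{v}$ suffices. On $[s/2,s]$, do not estimate $\int\norm{A_h^2\ee^{-\tau A_h^2}}\,\dd\tau$, which is what creates the logarithm. Instead integrate by parts using $A_h^2\ee^{-(s-r)A_h^2}=\partial_r\ee^{-(s-r)A_h^2}$, together with $\norm{(I-R_h)\dot u(r)}\le Ch^2\norm{A^3u(r)}\le Ch^2r^{-3/2}\norm{v}$.

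Every piece is then $O(h^2s^{-1/2}\ee^{-cs}\norm{v})$, which yields the theorem.
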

Note that $\beta=2$ in the latter theorem.  In \cite{Ali} these are
stated with a wider range of the order $\beta$, but this is
not needed in the present work.
\subsection{Gronwall's lemma}
We need a generalization of Gronwall's
lemma. A proof can found in \cite{Stig}.
\begin{lem}[Generalized Gronwall lemma]\label{GWlem}
Let $ \varphi\in L_1([0,T],\IR) $ be a nonnegative function. If
\begin{align*}
\varphi(t) \le At^{-1+\alpha} + B \int_0^t
(t-s)^{-1+\beta}\varphi(s)\,\dd s,
\quad t \in (0,T],
\end{align*}
with constants $ A,B \ge 0 $ and $\alpha, \beta > 0$, then
there is a constant $ C = C(B,T,\alpha,\beta) $ such that
\begin{align*}
\varphi(t) \le CAt^{-1+\alpha}, \quad t \in (0,T].
\end{align*}
\end{lem}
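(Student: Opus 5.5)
The natural route is the standard weakly singular Gronwall argument: iterate the inequality once so that the new kernel is less singular, repeat until the kernel is bounded, and then apply the classical Gronwall lemma. Throughout, $\varphi\ge0$. Write the hypothesis as $\varphi\le a+B\,K_\beta\varphi$, where
\[
a(t)=At^{-1+\alpha},\qquad (K_\gamma\psi)(t)=\int_0^t (t-s)^{-1+\gamma}\psi(s)\,\dd s .
\]
The key identity is the Beta-function composition rule
\[
K_\gamma K_\delta=\frac{\Gamma(\gamma)\Gamma(\delta)}{\Gamma(\gamma+\delta)}\,K_{\gamma+\delta},
\]
together with $K_\gamma t^{-1+\alpha}=B(\gamma,\alpha)\,t^{-1+\alpha+\gamma}$. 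Since both $K_\beta$ and the right-hand side are monotone and preserve nonnegativity, we may substitute the inequality into itself $n$ times. This gives
\[
\varphi\le\sum_{k=0}^{n-1}B^kK_\beta^k a+B^nK_\beta^n\varphi,
\qquad K_\beta^n=c_n K_{n\beta},
\]
where $c_n=\Gamma(\beta)^n/\Gamma(n\beta)$.

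Choose $n$ so that $n\beta\ge1$. Then the kernel $(t-s)^{-1+n\beta}\le T^{-1+n\beta}$ is bounded on $[0,T]$. The source terms satisfy
\[
K_\beta^k a = A\,c_k\,B(k\beta,\alpha)\,t^{-1+\alpha+k\beta}\le C A\,t^{-1+\alpha}
\]
for $t\le T$, because $t^{k\beta}\le T^{k\beta}$. We arrive at
\[
\varphi(t)\le C_1At^{-1+\alpha}+C_2\int_0^t\varphi(s)\,\dd s ,
\]
with $C_1,C_2$ depending only on $B,T,\alpha,\beta$. The fact that $\varphi\in L_1$ guarantees that every integral appearing here is finite, so the iteration is legitimate.

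The remaining step is to close with an ordinary Gronwall argument in a way that keeps the singular weight $t^{-1+\alpha}$, and this is where some care is needed. Set $\psi(t)=\int_0^t\varphi$. Then $\psi'\le C_1At^{-1+\alpha}+C_2\psi$ almost everywhere, with $\psi(0)=0$, and $\psi$ is absolutely continuous. Integrating gives
\[
\psi(t)\le C_1A\int_0^t e^{C_2(t-s)}s^{-1+\alpha}\,\dd s\le C_1Ae^{C_2T}\frac{t^\alpha}{\alpha}.
\]
Inserting this back yields
\[
\varphi(t)\le C_1At^{-1+\alpha}+\frac{C_2C_1e^{C_2T}}{\alpha}\,A\,t^{\alpha}\le CAt^{-1+\alpha},
\]
since $t^\alpha\le T\,t^{-1+\alpha}$.

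The only genuinely technical point is justifying the iterated substitution, namely that $K_\beta^n\varphi$ is finite. This follows from Young's convolution inequality, because the kernel is in $L_1(0,T)$ and $\varphi\in L_1$. Every constant produced depends only on $B,T,\alpha,\beta$, as the statement requires.
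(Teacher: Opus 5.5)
Your proof is correct. Iterating finitely many times with the Beta-function composition rule until $n\beta\ge 1$, then running the classical Gronwall argument on $\psi(t)=\int_0^t\varphi$ so that the weight $t^{-1+\alpha}$ survives, gives $\varphi(t)\le CAt^{-1+\alpha}$ with $C=C(B,T,\alpha,\beta)$. The paper gives no proof of its own and only refers to \cite{Stig}; your argument is the standard proof of this weakly singular Gronwall lemma.
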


We also use the standard Gronwall lemma:

\begin{lem}[Gronwall's lemma]\label{ourGW}
Let $ \varphi\in L_1([0,T],\IR) $. If
\begin{equation*}
\varphi(t) \le A + Ct + B \int_0^t \varphi(s)\,\dd s, \quad t  \in[0,T],
\end{equation*}
for some constants $A,C \ge 0$ and $B > 0$, then
\begin{align*}
\varphi(t) \le \Big(A+\frac{C}{B}\Big) \ee^{Bt},\quad t \in[0,T].
\end{align*}
\end{lem}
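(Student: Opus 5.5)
The statement is the standard Gronwall lemma: if $\varphi(t)\le A+Ct+B\int_0^t\varphi(s)\,\dd s$ on $[0,T]$, then $\varphi(t)\le (A+C/B)\ee^{Bt}$. Since $\varphi$ is only in $L_1$, I will avoid differentiating $\varphi$ itself and instead work with its integral. The approach is the integrating-factor argument, after absorbing the linear term into a modified constant.

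First I note that $Ct\le \frac{C}{B}(\ee^{Bt}-1)$, because $\ee^{Bt}-1\ge Bt$. Then I introduce
\begin{equation*}
\psi(t)=\int_0^t\varphi(s)\,\dd s .
\end{equation*}
This function is absolutely continuous, with $\psi'=\varphi$ almost everywhere, and the hypothesis reads
\begin{equation*}
\psi'(t)\le A+Ct+B\psi(t)\quad \text{for a.e. } t.
\end{equation*}
Multiplying by $\ee^{-Bt}$ gives $(\ee^{-Bt}\psi)'\le (A+Ct)\ee^{-Bt}$ almost everywhere. Since $\ee^{-Bt}\psi$ is absolutely continuous and $\psi(0)=0$, integrating yields
\begin{equation*}
\psi(t)\le \int_0^t (A+Cs)\ee^{B(t-s)}\,\dd s .
\end{equation*}

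Next I substitute this into the hypothesis:
\begin{equation*}
\varphi(t)\le A+Ct+B\int_0^t (A+Cs)\ee^{B(t-s)}\,\dd s .
\end{equation*}
I then compute the two integrals explicitly:
\begin{equation*}
B\int_0^t A\ee^{B(t-s)}\,\dd s=A(\ee^{Bt}-1),
\end{equation*}
and, integrating by parts,
\begin{equation*}
B\int_0^t Cs\,\ee^{B(t-s)}\,\dd s=\frac{C}{B}(\ee^{Bt}-1)-Ct .
\end{equation*}
Adding everything up, the terms $A$ and $Ct$ cancel, and
\begin{equation*}
\varphi(t)\le A\ee^{Bt}+\frac{C}{B}(\ee^{Bt}-1)\le \Big(A+\frac{C}{B}\Big)\ee^{Bt},
\end{equation*}
which is the claim.

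The only point needing care is that $\varphi$ is merely integrable. I handle this by working with $\psi$, which is absolutely continuous, so the product rule and the fundamental theorem of calculus apply to $\ee^{-Bt}\psi$. The sign of $\varphi$ is never used, since only the upper bound is propagated and $B>0$. No earlier result of the paper is needed.
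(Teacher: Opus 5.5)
Your proof is correct. Integrating-factor on the absolutely continuous function $\psi(t)=\int_0^t\varphi(s)\,\dd s$ is the right way to handle $\varphi\in L_1$, and the step $\varphi(t)\le A+Ct+B\psi(t)$ is where $B>0$ is used. Your evaluation $B\int_0^t (A+Cs)\ee^{B(t-s)}\,\dd s=A(\ee^{Bt}-1)+\frac{C}{B}(\ee^{Bt}-1)-Ct$ checks out, so you in fact get the slightly sharper bound $\varphi(t)\le A\ee^{Bt}+\frac{C}{B}(\ee^{Bt}-1)$. Only $C\ge 0$ is needed for the final inequality, not $A\ge0$.

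The paper gives no proof of this lemma; it quotes it as the standard Gronwall lemma, so there is no argument to compare yours with. Your opening remark $Ct\le\frac{C}{B}(\ee^{Bt}-1)$ is never used and can be deleted.

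A shorter route to the same sharp bound is to set $\tilde\varphi=\varphi+\frac{C}{B}$. Then the hypothesis becomes $\tilde\varphi(t)\le \big(A+\frac{C}{B}\big)+B\int_0^t\tilde\varphi(s)\,\dd s$, and your argument with $C=0$ applies directly, avoiding the integration by parts.
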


\subsection{Bounds for the nonlinear term}
Recall that the standard Sobolev norm $\norm[H^k]{\cdot}$ is
equivalent to the norm $\norm[k]{\cdot}$ in \eqref{eq:nondotnorm} for
integer $k\ge0$.
\begin{lem}\label{lemma9.2}
For $ u,v \in H^3 $ and $f(s)=s^3-s$ we have
\begin{align}\label{Deltaf}
\norm{\Delta f(u)} &\le C \big(1+\norm[1]{u}^2\big) \norm[3]{u},\\
\label{Deltauv-1}
\norm{A_h^{-\frac{1}{2}}P\big(f(u) - f(v)\big)}
&\le C\big( 1+\norm[1]{u}^2 + \norm[1]{v}^2 \big) \norm{u-v}.
\end{align}
\end{lem}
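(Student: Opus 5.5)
For \eqref{Deltaf} I will expand the Laplacian of the cubic directly. Since $f'(u)=3u^2-1$ and $f''(u)=6u$, we have
\begin{equation*}
\Delta f(u) = (3u^2-1)\Delta u + 6u\abs{\nabla u}^2 .
\end{equation*}
Each term is bounded by H\"older's inequality combined with Sobolev embeddings valid for $d\le 3$, namely $H^1\subset L_6$ and $H^2\subset L_\infty$. The embedding $H^2\subset W^1_4$ will also be useful.

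For the first term I use $\norm{u^2\Delta u}\le \norm[L_6]{u}^2\norm[L_6]{\Delta u}$. Here $\norm[L_6]{u}\le C\norm[1]{u}$, and $\norm[L_6]{\Delta u}\le C\norm[H^1]{\Delta u}\le C\norm[3]{u}$. The term $\norm{\Delta u}\le C\norm[3]{u}$ is immediate.

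For the second term I use $\norm{u\abs{\nabla u}^2}\le \norm[L_6]{u}\,\norm[L_6]{\nabla u}^2$. Then $\norm[L_6]{\nabla u}\le C\norm[H^1]{\nabla u}\le C\norm[2]{u}$, which would give $C\norm[1]{u}\norm[2]{u}^2$. To get the linear dependence on $\norm[3]{u}$, I interpolate $\norm[2]{u}\le C\norm[1]{u}^{1/2}\norm[3]{u}^{1/2}$, using the spectral definition of the norms. This gives $\norm[1]{u}\norm[2]{u}^2\le C\norm[1]{u}^2\norm[3]{u}$. Combining the two terms yields \eqref{Deltaf}.

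The main subtlety is that norm equivalence for $H^3$ requires the Neumann condition and convexity. Since the statement asserts that $\norm[k]{\cdot}$ is equivalent to the Sobolev norm, I take this as given.

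For \eqref{Deltauv-1} I argue by duality. For $w:=P(f(u)-f(v))\in\dot H$, I view $A_h^{-1/2}$ as applied to $P_hw\in\dot S_h$, which is how the expression should be read. Then
\begin{equation*}
\norm{A_h^{-1/2}P_hw}=\sup_{z_h\in\dot S_h}\frac{\inner{w}{z_h}}{\norm{A_h^{1/2}z_h}}
=\sup_{z_h\in\dot S_h}\frac{\inner{f(u)-f(v)}{z_h}}{\norm{\nabla z_h}} ,
\end{equation*}
using \eqref{chc2:a1} and the fact that $z_h$ has mean zero. Next I write
\begin{equation*}
f(u)-f(v)=(u-v)(u^2+uv+v^2-1).
\end{equation*}
By H\"older's inequality with exponents $2,3,6$,
\begin{equation*}
\abs{\inner{f(u)-f(v)}{z_h}}\le \norm{u-v}\,\norm[L_3]{u^2+uv+v^2-1}\,\norm[L_6]{z_h}.
\end{equation*}
For the middle factor, $\norm[L_3]{u^2}=\norm[L_6]{u}^2\le C\norm[1]{u}^2$, and the mixed and constant terms are handled the same way, giving $C(1+\norm[1]{u}^2+\norm[1]{v}^2)$. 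For the last factor, Sobolev and Poincar\'e for mean-zero functions give $\norm[L_6]{z_h}\le C\norm{\nabla z_h}$. Taking the supremum gives \eqref{Deltauv-1}.

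I expect no serious obstacle in either estimate. The only care needed is the interpolation step in \eqref{Deltaf}, which secures linear dependence on $\norm[3]{u}$, and the correct reading of $A_h^{-1/2}$ in \eqref{Deltauv-1}, which amounts to inserting $P_h$.
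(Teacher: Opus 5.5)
Your proposal is correct and follows essentially the same route as the paper: the same expansion $\Delta f(u)=f'(u)\Delta u+f''(u)\abs{\nabla u}^2$ bounded by H\"older with $L_6$ Sobolev embeddings and closed by the spectral interpolation $\norm[2]{u}\le C\norm[1]{u}^{1/2}\norm[3]{u}^{1/2}$ for \eqref{Deltaf}, and the same duality argument reducing \eqref{Deltauv-1} to a supremum over $w_h\in\dot S_h$ with $\norm{\nabla w_h}$ in the denominator, followed by Sobolev--Poincar\'e for $\norm[L_6]{w_h}$. The differences are only cosmetic: you factor $f(u)-f(v)=(u-v)(u^2+uv+v^2-1)$ and apply H\"older with exponents $2,3,6$ in one step, whereas the paper writes $f(u)-f(v)=\int_0^1 f'(su+(1-s)v)\,\dd s\,(u-v)$ and passes through $\norm[L_{6/5}]{\cdot}$.
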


\begin{proof}  We have $f'(s)=3s^2-s$, $f''(s)=6s$.
  Using H\"older's inequality, Sobolev's inequality $\norm[L_6]{u} \le
  C\norm[H^1]{u}$ (for $d\le3$), and $\norm[H^k]{u} \le C\norm[k]{u}$,
  we get
\begin{align*}
  \begin{split}
\norm{\Delta f(u)}
&=\norm{f'(u)\Delta u+f''(u)\abs{\nabla u}^2}
\\&
\le \norm[L_3]{f'(u)}\norm[L_6]{\Delta u}
+   \norm[L_6]{f''(u)}\norm[L_6]{\nabla u}^2
\\&
\le C \big(1+\norm[L_6]{u}^2\big)\norm[L_6]{\Delta u}
+ C \norm[L_6]{u}\norm[L_6]{\nabla u}^2
\\&
\le C \big(1+\norm[H^1]{u}^2\big)\norm[H^3]{u}
+ C \norm[H^1]{u}\norm[H^2]{u}^2
\\&
\le C \big(1+\norm[1]{u}^2\big)\norm[3]{u}
+C \norm[1]{u}\norm[2]{u}^2
\\&
\le C \big(1+\norm[1]{u}^2\big)\norm[3]{u},
\end{split}
\end{align*}
where we used
$\norm[2]{u}\le C\norm[1]{u}^{\frac{1}{2}}\norm[3]{u}^{\frac{1}{2}}$
in the last step.
This proves \eqref{Deltaf}.

For \eqref{Deltauv-1} we apply \eqref{chc2:a1} and the H\"older and
Sobolev inequalities {($d\le 3$)} to get
\begin{align*}
  \begin{split}
\norm{A_h^{-\frac{1}{2}} P \varphi}
&=\sup_{v_h\in{S}_h}\frac{\inner{A_h^{-\frac{1}{2}} P \varphi}{v_h}}{\norm{v_h}}
=\sup_{v_h\in{S}_h}\frac{\inner{ \varphi}{A_h^{-\frac{1}{2}} P v_h}}{\norm{v_h}}
\\&
=\sup_{w_h\in\dot{S}_h}\frac{\inner{\varphi}{w_h}}{|w_h|_1}
\le \sup_{w_h\in\dot{S}_h}\frac{\norm[L_{6/5}]{\varphi}\norm[L_6]{w_h}}{|w_h|_1}
\le C \norm[L_{{6}/{5}}]{\varphi}.
\end{split}
\end{align*}
We use this with $\varphi = f(u)-f(v)= \int_0^1f'(u_s)\,\dd s\,(u-v)$,
where $u_s = su +(1-s)v$, and H\"older's and Sobolev's inequalities to
get
\begin{align*}
&\norm[]{A_h^{-\frac{1}{2}}P\big(f(u) - f(v)\big)}
=
\norm{A_h^{-\frac{1}{2}} P {\varphi} }
\le C \norm[L_{{6}/{5}}]{{\varphi}}\\
&\qquad \le C \int_0^1\norm[L_3]{f'(u_s)}\,\dd s\, \norm{u-v}
 \le C\int_0^1\big(1+\norm[L_6]{u_s}^2\big)\,\dd s\,\norm{u - v}\\
&\qquad \le C\int_0^1\big(1+\norm[1]{u_s}^2\big)\,\dd s\, \norm{u-v}
 \le C\big(1+\norm[1]{u}^2 + \norm[1]{v}^2\big)\norm{u-v}.
\end{align*}
This is \eqref{Deltauv-1}.
\end{proof}
\section{The Cahn-Hilliard-Cook equation}
\subsection{The continuous problem}
The Cahn-Hilliard-Cook equation is
\begin{equation}\label{C-H-C}
\begin{aligned}
& \dd u - \Delta w \,\dd t = \dd W && \text{in } \ \cD\times (0,T],\\
& w = -\Delta u +f(u)   && \text{in } \ \cD\times (0,T],\\
& \frac{\pt u}{\pt n} = \frac{\pt w}{\pt n} = 0
&& \text{on } \ \pt\cD\times (0,T],\\
& u(0) = u_0  && \text{in } \ \cD.
\end{aligned}
\end{equation}
The finite element approximation is based on its weak form, which is
(formally)
\begin{equation}\label{wCHC}
\begin{aligned}
& \inner{u(t)}{v} - \inner{u_0}{v}
 + \int_0^t \inner{\nabla w(s)}{\nabla v}\,\dd s
 = \int_0^t \inner{\dd W(s)}{v},&&t\in (0,T],\\
& \inner{w}{v}=\inner{\nabla u}{ \nabla v}+\inner{f(u)}{v},&& t\in (0,T],
\end{aligned}
\end{equation}
for all $v \in \dot{H}^1$. With
the operator $A$, defined in \subsec\ref{norms}, we
write \eqref{C-H-C} in the formal abstract form on
$H=L_2(\mathcal{D})$:
\begin{equation}\label{C-H-C-X}
\dd X + \big( A^2 X + Af(X) \big)\,\dd t = \dd W,\quad t\in (0,T];
\quad X(0) = X_0.
\end{equation}
A \textit{weak solution} of \eqref{C-H-C-X} is an adapted $H$-valued
process $X$, which is continuous almost surely and satisfies the equation
\begin{align}\label{weaksol}
\inner{X(t)}{v} - \inner{X_0}{v}
+ \int_0^t \big(\inner{X(s)}{A^2 v}
+ \inner{f(X(s))}{A v}\big)\,\dd s
= \int_0^t \inner{\dd W(s)}{v}
\end{align}
almost surely for all $v \in \dot{H}^4=D(A^2)$, $t\in [0,T]$, where we
also require the integrand in the deterministic integral to be in
$L_1([0,T],\IR)$ almost surely.  A \textit{mild solution} of
\eqref{C-H-C-X} is an adapted $H$-valued process $X$, continuous
almost surely, which satisfies
\begin{align}\label{mildC-H-C-X}
X(t) =  \ee^{-tA^2}X_0 - \int_0^t A\ee^{-(t-s)A^2}f(X(s))\,\dd s
+ \int_0^t \ee^{-(t-s)A^2}\,\dd W(s),
\end{align}
almost surely for $t\in [0,T]$, where we also require that the first
integrand is in $L_1([0,T],H)$ and the stochastic integral exists
almost surely.
\subsection{The finite element problem}
Recalling \eqref{wCHC}, we define the finite element solution $u_h(t),
w_h(t) \in S_h$ of \eqref{C-H-C} by
\begin{align*}
\begin{aligned}
& \inner{u_h(t)}{v_h}
 - \inner{u_0}{v_h}+ \int_0^t \inner{\nabla w_h(s)}{\nabla v_h}\,\dd s
= \int_0^t \inner{\dd W(s)}{v_h},\quad &&t\in(0,T],\\
& \inner{w_h}{v_h}=\inner{\nabla u_h}{\nabla v_h}
  +\inner{f(u_h)}{v_h},
&& t\in(0,T],
\end{aligned}
\end{align*}
for all $v_h \in S_h$. With the operators $A_h$,
$P_h$ from \subsec \ref{subsec:fem} we write this as an abstract
equation in $S_h$:
\begin{equation}\label{FEC-H-C-X}
\dd X_h + \big( A_h^2 X_h + A_h P_h f(X_h) \big)\,\dd t
= P_h \, \dd W,\quad t\in(0,T];\quad X_h(0) = P_h X_0.
\end{equation}
Since $S_h$ is finite-dimensional and $f$ is a polynomial, it is easy
to see using standard arguments that \eqref{FEC-H-C-X} has a unique
solution $X_h$, adapted, continuous almost surely,
satisfying both
$$
X_h(t) - P_hX_0
+ \int_0^t \big(A_h^2X_h(s)
+  A_hP_hf(X_h(s))\big)\,\dd s = \int_0^tP_h\,\dd W(s),
$$
and
\begin{align*}\label{mildFEC-H-C-X}
X_h(t) =  \ee^{-tA_h^2}P_hX_0 - \int_0^t \ee^{-(t-s)A_h^2}A_h P_h
f(X_h(s))\,\dd s
+ \int_0^t \ee^{-(t-s)A_h^2}P_h\,\dd W(s) ,
\end{align*}
almost surely for $t\in [0,T]$.
\subsection{A Lyapunov functional}
Define the functional
\begin{equation}\label{lyapunov}
J(u) = \frac{1}{2} \norm{\nabla u}^2 + \int_{\cD} F(u) \, \dd x, \quad u \in H^1,
\end{equation}
where $F(s) = \frac{1}{4}s^4 - \frac{1}{2}s^2$ is a primitive of $f(s)
= s^3 - s$. This is a Lyapunov functional for the deterministic
Cahn-Hilliard equation, which means that in the deterministic case
$J(X(t))$ does not increase along solution paths. For the stochastic
equation this is not true, but we have a bound for the expected value
of $J(X(t))$.

\begin{thm}\label{lyap-thm}
  Assume that $\norm[\HS]{A^{\frac{1}{2}}Q^{\frac{1}{2}}} < \infty$ and that $X_0$ is
  $\cF_0$-measurable with values in $H^1$ satisfying $\IE[J(X_0)] < \infty$. If
  $X$ is a weak solution of \eqref{C-H-C-X} and $X_h$ is the solution of
  \eqref{FEC-H-C-X}, then, for all $t>0$, we have
\begin{equation}\label{JX}
\IE[J(X(t))]+\IE\Big[\int_0^t \snorm[1]{J'(X(s))}^2 \,\dd s \Big]
\le C \Big( \IE[J(X_0)] + K_Qt  + K_Q ^2t^{2}   \Big)
\end{equation}
and
\begin{equation}\label{JXh}
\IE[J(X_h(t))]+\IE\Big[\int_0^t \snorm[1]{J'(X_h(s))}^2 \,\dd s \Big]
\le C \Big( \IE[J(P_h X_0)] +  K_Qt  + K_Q ^2 t^{2}  \Big),
\end{equation}
where $K_Q = \norm[\HS]{A^{\frac{1}{2}}Q^{\frac{1}{2}}}^2
+\norm[\HS]{Q^{\frac{1}{2}}}^2.$
\end{thm}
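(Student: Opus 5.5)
We plan to apply Itô's formula to $J(X(t))$, compute the drift and the Itô correction, and close the resulting estimate with Gronwall's lemma. We first do the argument for the finite element solution $X_h$, where everything is finite-dimensional and Itô's formula applies directly. For $X$ we run the same argument formally, and justify it afterwards through a Galerkin or spectral approximation and a limit, using lower semicontinuity of the left side.

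\textbf{The Itô expansion.} The first derivative is $J'(u)=-\Delta u+f(u)$, understood weakly as $\inner{J'(u)}{v}=\inner{\nabla u}{\nabla v}+\inner{f(u)}{v}$. The second derivative is $J''(u)[v,v]=\norm{\nabla v}^2+\int_\cD f'(u)v^2\,\dd x$. For $X_h$ the drift in \eqref{FEC-H-C-X} is $-A_h(A_hX_h+P_hf(X_h))=-A_hP_hJ'(X_h)$, since the discrete chemical potential is $w_h=A_hX_h+P_hf(X_h)=P_hJ'(X_h)$. The drift term in Itô's formula is therefore
\[
-\inner{J'(X_h)}{A_hw_h}=-\inner{w_h}{A_hw_h}=-\snorm[1]{w_h}^2 .
\]
This is the dissipation term on the left of \eqref{JXh}, where $\snorm[1]{J'(X_h)}$ is read as $\snorm[1]{P_hJ'(X_h)}$. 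The martingale term $\int_0^t\inner{J'(X_h)}{P_h\,\dd W}$ has zero expectation after a stopping-time localization. The Itô correction is
\[
\tfrac12\sum_k\gamma_k\Big(\norm{\nabla P_he_k}^2+\int_\cD f'(X_h)(P_he_k)^2\,\dd x\Big).
\]
Using $f'(s)=3s^2-1\le 3s^2$, we get the bound
\[
C\norm[\HS]{A^{\frac12}P_hQ^{\frac12}}^2+3\sum_k\gamma_k\int_\cD X_h^2(P_he_k)^2\,\dd x .
\]
By the $H^1$-stability in \eqref{x}, the first term is at most $CK_Q$.

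\textbf{Main obstacle: the quartic correction.} We must bound $\sum_k\gamma_k\int X_h^2(P_he_k)^2$ without assuming that the eigenfunctions $e_k$ are bounded in $L_\infty$. We use Hölder's inequality:
\[
\int X_h^2(P_he_k)^2\le\norm[L_4]{X_h}^2\norm[L_4]{P_he_k}^2 .
\]
Sobolev's embedding $H^1\subset L_4$ ($d\le3$) and \eqref{x} give $\norm[L_4]{P_he_k}^2\le C\norm[1]{e_k}^2$. Hence
\[
\sum_k\gamma_k\norm[L_4]{P_he_k}^2\le C\big(\norm[\HS]{A^{\frac12}Q^{\frac12}}^2+\norm[\HS]{Q^{\frac12}}^2\big)=CK_Q .
\]
Since $\norm[L_4]{u}^4\le 4\int F(u)+C|\cD|$, we have $\norm[L_4]{X_h}^2\le C(1+J(X_h))^{1/2}$. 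Note also that $J$ is bounded below, $J\ge-\tfrac14|\cD|$. Collecting the pieces, with $\psi(t)=\IE[J(X_h(t))]+\tfrac14|\cD|+1\ge 1$, we obtain
\[
\psi(t)+\IE\int_0^t\snorm[1]{w_h}^2\,\dd s\le\psi(0)+CK_Qt+CK_Q\int_0^t\psi(s)^{1/2}\,\dd s .
\]

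\textbf{Closing without exponential growth.} Lemma \ref{ourGW} applied directly would give exponential growth in $t$, so we use a sublinear argument instead. Set $M(t)=\sup_{s\le t}\psi(s)$. The inequality gives $M\le\psi(0)+CK_Qt+CK_QtM^{1/2}$. By Young's inequality, $CK_QtM^{1/2}\le\tfrac12M+\tfrac12C^2K_Q^2t^2$. This yields $M(t)\le C(\psi(0)+K_Qt+K_Q^2t^2)$. Substituting back controls the dissipation integral with the same right-hand side, which is \eqref{JXh}; the additive constants from $J\ge-\tfrac14|\cD|$ are absorbed into $C$.

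For \eqref{JX} the same computation applies with $P_h$ and $A_h$ replaced by the identity and $A$. It is made rigorous by applying Itô's formula to spectral Galerkin approximations $P_NX$ and passing to the limit. Alternatively, it may be deduced from \eqref{JXh} via convergence $X_h\to X$, but that would be circular here, so we use the spectral Galerkin route.
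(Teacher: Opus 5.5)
Your proposal is correct and follows the paper's proof essentially step for step: It\^o's formula for $J(X_h)$, dissipation $-\snorm[1]{P_hJ'(X_h)}^2$ from the drift, $H^1$-stability of $P_h$ plus H\"older/Sobolev in $L_4$ to bound the trace term by $CK_Q(1+J(X_h))^{1/2}$, and spectral Galerkin for $X$. The differences are cosmetic: you expand the It\^o correction in the eigenbasis of $Q$ rather than of $Q_h$, and you close by absorbing $CK_Qt\,M^{1/2}$ into $M(t)=\sup_{s\le t}\psi(s)$ (which needs $M(t)<\infty$, supplied by your stopping-time localization) instead of the paper's Young-with-$\epsilon$ followed by Gronwall with $\epsilon=t^{-1}$; there is also one small slip, since $\norm[L_4]{u}^4\le 4\int_\cD F(u)\,\dd x+C\abs{\cD}$ is false ($s^4-4F(s)=2s^2$ is unbounded) and should be replaced by $\norm[L_4]{u}^4\le 8\int_\cD F(u)\,\dd x+4\abs{\cD}$.
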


\begin{proof}
  We prove \eqref{JXh}; the proof of \eqref{JX} is obtained in
    a similar way by approximating \eqref{C-H-C-X} by Galerkin's
    method based on the eigenbasis of $A$ instead of the finite
    element Galerkin method used in \eqref{FEC-H-C-X} (see also
    \cite{Prato}).

  We consider \eqref{FEC-H-C-X} as an It\^o differential equation in
  $S_h$ driven by $P_hW$, which is a $Q_h$-Wiener process in $S_h$
  with $Q_h=P_hQP_h$. By assumption \eqref{x} it follows that $
  \IE[J(P_h X_0)]<\infty$, if $ \IE[J(X_0)]<\infty$.  By applying
  It\^o's formula (\cite[Theorem 4.17]{DaPratoZabczyk}) to
  $J(X_h(t))$, we obtain
\begin{align*}
J(X_h(t))
& = J(X_h(0)) + \int_0^t \inner{J'(X_h(s))}{\dd X_h(s)}
+ \frac{1}{2}\int_0^t \Tr(J''(X_h(s)Q_h))\,\dd s \\
& =  J(P_hX_0) +
\int_0^t \inner{J'(X_h(s))}{-A_h^2 X_h(s) - A_h P_h f(X_h(s))}\,\dd s\\
& \quad + \int_0^t \inner{J'(X_h(s))}{P_h\,\dd W(s)}
+ \frac{1}{2}\int_0^t \Tr(J''(X_h(s)Q_h))\,\dd s .
\end{align*}
With a slight abuse of notation we consider here $J$ as a function
$S_h\to\IR$ and we compute $J'(u_h)\in S_h$ and $J''(u_h)\colon S_h\to
S_h$ as follows:
\begin{align*}
\inner{J'(u_h)}{v_h}
= \inner{\nabla u_h}{\nabla v_h} + \inner{f(u_h)}{v_h}
= \inner{A_h u_h + P_h f(u_h)}{v_h}
\end{align*}
and
\begin{align*}
\inner{J''(u_h)v_h}{w_h}
 = \inner{\nabla v_h}{\nabla w_h} + \inner{f'(u_h)v_h}{w_h}
 = \inner{A_h v_h + P_h [f'(u_h)v_h]}{w_h}
\end{align*}
for $u_h,v_h,w_h\in S_h$, so that
\begin{align} \label{eq:JP}
J'(u_h) = A_h u_h + P_h f(u_h),  \quad
J''(u_h) = A_h + P_h[f'(u_h)\,\cdot\,].
\end{align}
Hence, by \eqref{chc2:a1},
\begin{align}\label{J(X_h)}
  \begin{split}
J(X_h(t)) +\int_0^t \snorm[1]{J'(X_h(s))}^2 \,\dd s
& =  J(P_hX_0) + \int_0^t \inner{J'(X_h(s))}{P_h\,\dd W(s)}\\
& \quad
+ \frac{1}{2}\int_0^t \Tr(J''(X_h(s)Q_h))\,\dd s .
\end{split}
\end{align}
The stochastic integral is a martingale, so that
$\IE[\int_0^t \inner{J'(X_h)}{P_h\,\dd W}]=0$, and hence
\begin{equation}\label{EJmain}
  \begin{split}
&\IE[J(X_h(t))] +\IE\Big[\int_0^t \snorm[1]{J'(X_h(s))}^2 \,\dd s  \Big]
\\&\qquad
=  \IE[J(P_hX_0)]
+ \frac{1}{2} \IE \Big[ \int_0^t \Tr(J''(X_h(s))Q_h)  \,\dd s \Big].
\end{split}
\end{equation}
We now compute
\begin{align*}
  \Tr(J''(X_h(s))Q_h)
   = \Tr(A_hQ_h)+\Tr(P_h[f'(X_h(s))\,\cdot\,]Q_h)
\end{align*}
by the definition in \eqref{eq:defTr}. To this end let $\lbrace
\varphi_{h,j}\rbrace_{j=0}^{N_h}$ be an orthonormal basis of
eigenvectors of $A_h$ and $\lbrace \lambda_{h,j}\rbrace_{j=0}^{N_h}$
the corresponding eigenvalues.  Then, since
$A_h\varphi_{h,0}=0$ and $\norm[\HS]{T}=\norm[\HS]{T^*}$,
\begin{align*}
\Tr(A_hQ_h)
&= \sum_{j=1}^{N_h} \inner{A_h Q_h \varphi_{h,j}}{\varphi_{h,j}}
 = \sum_{j=1}^{N_h} \inner{P_h Q P_h \varphi_{h,j}}{A_h \varphi_{h,j}}
\\&
= \sum_{j=1}^{N_h} \lambda_{h,j} \inner{Q\varphi_{h,j}}{\varphi_{h,j}}
= \sum_{j=1}^{N_h} \inner{Q^{\frac{1}{2}}A_{h}^{\frac{1}{2}}\varphi_{h,j}}{Q^{\frac{1}{2}}A_{h}^{\frac{1}{2}}\varphi_{h,j}}
\\&
= \sum_{j=1}^{N_h} \norm{Q^{\frac{1}{2}}A_h^{\frac{1}{2}}P_h\varphi_{h,j}}^2
=\norm[\HS]{Q^{\frac{1}{2}} A_h^{\frac{1}{2}}P_h}^2
=\norm[\HS]{(Q^{\frac{1}{2}} A_h^{\frac{1}{2}}P_h)^*}^2
\\&
= \norm[\HS]{A_h^{\frac{1}{2}} P_h Q^{\frac{1}{2}}}^2
= \norm[\HS]{A_h^{\frac{1}{2}} P_h A^{-\frac{1}{2}} A^{\frac{1}{2}} Q^{\frac{1}{2}}}^2
\\&
\le \norm[B(\dot{H})]{A_h^{\frac{1}{2}} P_h A^{-\frac{1}{2}}}^2
\norm[\HS]{A^{\frac{1}{2}} Q^{\frac{1}{2}}}^2.
\end{align*}
Here we use \eqref{chc2:a1} and \eqref{x} to get
\begin{align*}
\norm{A_h^{\frac{1}{2}} P_h A^{-\frac{1}{2}}v}
= \snorm[1]{P_h A^{-\frac{1}{2}} v}
\le C \snorm[1]{A^{-\frac{1}{2}}v}
= C\norm{v},\quad v \in \dot{H},
\end{align*}
so that $\norm[B(\dot{H})]{A_h^{\frac{1}{2}}P_hA^{-\frac{1}{2}}}
\le C$. Hence, with $K_Q =
\norm[\HS]{A^{\frac{1}{2}}Q^{\frac{1}{2}}}^2 +\norm[\HS]{Q^{\frac{1}{2}}}^2$,
\begin{equation}\label{AQ}
\Tr(A_h Q_h)
\le  C \norm[\HS]{A^{\frac{1}{2}}Q^{\frac{1}{2}}}^2
\le CK_Q.
\end{equation}

Let $ \lbrace e_{h,k}\rbrace_{k=0}^{N_h} $ be an orthonormal
eigenbasis of $ Q_h $ and $ \lbrace\gamma_{h,k}\rbrace_{k=0}^{N_h} $
the corresponding eigenvalues. We get
\begin{align}  \label{xxx}
  \begin{split}
&\Tr\big(P_h[f'(X_h)\,\cdot\,]Q_h \big)
 = \sum_{k=0}^{N_h} \inner{P_h[f'(X_h) Q_h e_{h,k}]}{e_{h,k}}\\
&\qquad = \sum_{k=0}^{N_h} \gamma_{h,k} \inner{f'(X_h)e_{h,k}}{e_{h,k}}
= \sum_{k=0}^{N_h} \inner{f'(X_h) Q_h^{\frac{1}{2}} e_{h,k}}{Q_h^{\frac{1}{2}} e_{h,k}}.
\end{split}
\end{align}
By using the bound $\abs{f'(s)} \le C(1+s^2)$ and H\"older's
and Sobolev's inequalities we get
\[
\abs{\inner{f'(u)v}{v}}
\le C(1+\norm[L_4]{u}^2) \norm[L_4]{v}^2
\le C(1+\norm[L_4]{u}^2) \norm[H^1]{v}^2
\le C(1+\norm[L_4]{u}^2) \norm[1]{v}^2.
\]
By \eqref{xx} and \eqref{chc2:a1} we have, for $v_h
\in S_h$,
\begin{align*}
\norm[1]{v_h}^2
= \snorm[1]{v_h}^2 + \inner{v_h}{\varphi_0}^2
= \norm{A_h^{\frac{1}{2}}v_h}^2 + \inner{v_h}{\varphi_0}^2,
\end{align*}
so that, by \eqref{AQ},
\begin{align*}
&\sum_{k=0}^{N_h} \norm[1]{Q_h^{\frac{1}{2}}e_{h,k}}^2
= \sum_{k=0}^{N_h} \norm{A_h^{\frac{1}{2}}Q_h^{\frac{1}{2}}e_{h,k}}^2
+ \sum_{k=0}^{N_h} \inner{Q_h^{\frac{1}{2}}e_{h,k}}{\varphi_0}^2
\\& \qquad
= \sum_{k=0}^{N_h} \gamma_{h,k}\inner{A_he_{h,k}}{e_{h,k}}
+ \sum_{k=0}^{N_h} \gamma_{h,k}\inner{e_{h,k}}{\varphi_0}^2
\\& \qquad
\le  \Tr(A_hQ_h)+\Tr(Q_h)
\le  \Tr(A_hQ_h)+\Tr(Q)
\\& \qquad
\le C \norm[\HS]{A^{\frac{1}{2}}Q^{\frac{1}{2}}}^2+ \norm[\HS]{Q^\frac12}^2
\le CK_Q.
\end{align*}
Returning to \eqref{xxx}, we now have
\begin{equation*} \label{Phdot}
|\Tr( P_h [f'(X_h)\,\cdot\,]Q_h)|
\le C\big(1+\norm[L_4]{X_h}^2\big) \sum_{k=0}^{N_h} \norm[1]{Q_h^{\frac{1}{2}}e_{h,k}}^2
\le C\big(1+\norm[L_4]{X_h}^2\big)K_Q,
\end{equation*}
Using also \eqref{AQ} we conclude
\begin{align} \label{eq:foo}
|\Tr(J''(X_h)Q_h)|\le C K_Q\big(1+\norm[L_4]{X_h}^2\big).
\end{align}
It remains to relate $\norm[L_4]{X_h}$ to
$J(X_h)$.  By definition of the Lyapunov functional \eqref{lyapunov}
and noting that $F(s) = \frac{1}{4} s^4 - \frac{1}{2} s^2 \ge c_1 s^4
- c_2$, we get (with new constants)
\begin{align*}
J(u) \ge \frac{1}{2}\norm{\nabla u}^2 + C_1 \norm[L_4]{u}^4 - C_2,
\end{align*}
which implies
\begin{align}  \label{eq:jx}
\norm{\nabla u}^2+\norm[L_4]{u}^4 \le C_3\big(1+J(u)\big).
\end{align}
Hence, in view of \eqref{eq:foo},
\begin{align}
\label{eq:TrJQ}
|\Tr(J''(X_h)Q_h)|\le C K_Q\big(1+J(X_h)^{\frac12}\big).
\end{align}
Inserting this into \eqref{EJmain} gives
\begin{equation}\label{EJXhfinal}
  \begin{split}
&\IE[J(X_h(t))]+\IE\Big[\int_0^t \snorm[1]{J'(X_h(s))}^2 \,\dd s \Big]
\\&\qquad
\le  \IE[J(P_h X_0)]
+ C K_Q  \Big( t + \int_0^t \IE\big[ J(X_h(s))^{\frac12}\big]\,\dd s \Big).
\end{split}
\end{equation}
Here, by H\"older's and Young's inequalities, we have, for $\epsilon > 0$,
\begin{align*}
\begin{split}
C K_Q  \int_0^t \IE[J(X_h(s))^{\frac12}]\,\dd s
& \le C K_Q t^{\frac{1}{2}} \Big( \int_0^t \IE[J(X_h(s))]\,\dd s \Big)^{\frac{1}{2}}
\\&
\le \epsilon  \int_0^t \IE[J(X_h(s))]\,\dd s + C\epsilon^{-1} tK_Q^2.
\end{split}
\end{align*}
Putting this in \eqref{EJXhfinal} gives
\begin{align*}
&\IE[J(X_h(t))]+\IE\Big[\int_0^t \snorm[1]{J'(X_h(s))}^2 \,\dd s \Big]
\\&\qquad
\le \IE[J(P_hX_0)]
+ C\big( K_Q  + \epsilon^{-1}K_Q ^2 \big) t
+ \epsilon\int_0^t \IE[J(X_h(s))]\,\dd s.
\end{align*}
We apply the Gronwall Lemma \ref{ourGW} to get,
\begin{align*}
&\IE[J(X_h(t))]+\IE\Big[\int_0^t \snorm[1]{J'(X_h(s))}^2 \,\dd s \Big]
\\&\qquad
\le \ee^{\epsilon t}\Big( \IE[J(P_hX_0)]
+ C( \epsilon^{-1} K_Q  + \epsilon^{-2} K_Q ^2)  \Big)\\
\\&\qquad
\le \ee \Big( \IE[J(P_hX_0)] + C( t K_Q  + t^2 K_Q ^2)  \Big),
\end{align*}
where for each fixed $t$ we have chosen $\epsilon = t^{-1}$ to get an optimal bound.
\end{proof}

This theorem is adapted from \cite{Prato}. We have improved it in
several ways. First the growth of the bound is reduced from
exponential to quadratic with respect to $t$.  Most importantly, we
have removed the assumption that $A$ and $Q$ have a common eigenbasis
and that the eigenbasis satisfies $\norm[L_\infty]{e_k}\le C$ for all
$k$.  This is important because even if $A$ and $Q$ commute, this will
not be true for $A_h$ and $Q_h$.  This is crucial for the proof of the
bound for $X_h$.

Note that $\norm[\HS]{A^{\frac12}Q^{\frac12}}<\infty$ implies $K_Q =
  \norm[\HS]{A^{\frac{1}{2}}Q^{\frac{1}{2}}}^2
  +\norm[\HS]{Q^{\frac{1}{2}}}^2<\infty$.  This is because of the
  boundedness of $A^{-\frac12}$ and
\begin{align} \label{KQ}
  \begin{split}
 \norm[\HS]{Q^{\frac{1}{2}}}^2
  &=\sum_{j=1}^\infty\inner{Q\varphi_j}{\varphi_j}+\inner{Q\varphi_0}{\varphi_0}
  =\sum_{j=1}^\infty\norm{PQ^\frac12\varphi_j}^2+\inner{Q\varphi_0}{\varphi_0} \\
  &=\sum_{j=1}^\infty\norm{A^{-\frac12}A^{\frac12}PQ^\frac12\varphi_j}^2+\inner{Q\varphi_0}{\varphi_0} \\
  &\le C\sum_{j=1}^\infty\norm{A^{\frac12}PQ^\frac12\varphi_j}^2+\inner{Q\varphi_0}{\varphi_0} \\
  &\le C\norm[\HS]{A^{\frac12}PQ^\frac12}^2+\inner{Q\varphi_0}{\varphi_0}
  = C\norm[\HS]{A^{\frac12}Q^\frac12}^2+\inner{Q\varphi_0}{\varphi_0}<\infty .
\end{split}
\end{align}
This condition is therefore the same as the condition for regularity
of order $\beta=3$ for $W_A(t)$ in Theorem \ref{Thm3.1}.

We now use the previous theorem together with Chebyshev's inequality
to obtain pathwise norm bounds uniformly on subsets of $\Omega$ with
probability arbitrarily close to $1$.  In order to achieve this we
first replace the bound of $\sup_{s\in[0,t]}\IE[J(X(s))]$ from
Theorem \ref{lyap-thm} by a bound for
$\IE[\sup_{s\in[0,t]}(\norm{\nabla X(s)}^2 +
\norm[L_4]{X(s)}^4)]$.

\begin{coro}\label{coro2.5}
  Assume that $\norm[\HS]{A^{\frac12}Q^{\frac12}}<\infty$ and that
  $X_0$ is $\cF_0$-measurable with values in $H^1$ satisfying
  \begin{align} \label{eq:cororho}
  \norm[L_2(\Omega,H^1)]{X_0}^2 + \norm[L_4(\Omega,L_4)]{X_0}^4 \le \rho.
  \end{align}
   If $X$ is a weak solution of \eqref{C-H-C-X} and $X_h$ is the solution of
  \eqref{FEC-H-C-X},
  then, for $T\ge0$,
  \begin{align}
    \label{eq:2}
\IE\Big[\sup_{t\in[0,T]}
\big(\norm{\nabla X(t)}^2 + \norm[L_4]{X(t)}^4\big) \Big]
&\le K_T, \\
    \label{eq:3}
\IE\Big[\sup_{t\in[0,T]}
\big(\norm{\nabla X_h(t)}^2 + \norm[L_4]{X_h(t)}^4\big) \Big]
&\le K_T,
 \end{align}
where $K_T$ depends on $\rho,K_Q, T$.
Moreover, for every $\epsilon \in
  (0,1) $, there is $\Omega_\epsilon \subset \Omega$ with
  $\IP(\Omega_\epsilon) \ge 1-\epsilon$ and
\begin{align}
 \norm{\nabla X(t)}^2 + \norm[L_4]{X(t)}^4
 &\le \epsilon^{-1}K_T \quad \text{on} \ \Omega_\epsilon, \ t \in [0,T],\label{coroone}\\
 \norm{\nabla X_h(t)}^2 + \norm[L_4]{X_h(t)}^4
 &\le \epsilon^{-1}K_T \quad \text{on} \ \Omega_\epsilon, \ t \in [0,T],\label{corotwo}\\
 \norm[1]{X(t)}^2 + \norm[1]{X_h(t)}^2
 &\le \epsilon^{-1} K_T \quad \text{on} \ \Omega_\epsilon, \ t \in [0,T],\label{corothree}\\
 \norm[3]{W_A(t)}^2
 &\le \epsilon^{-1}K_T \quad \text{on} \ \Omega_\epsilon, \ t \in [0,T].\label{corofour}
\end{align}
\end{coro}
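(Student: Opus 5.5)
Everything rests on the pathwise Itô identity \eqref{J(X_h)} from the proof of Theorem~\ref{lyap-thm}; the task is to move the supremum in time inside the expectation. Take the supremum over $t\in[0,T]$ in \eqref{J(X_h)} and drop the nonnegative dissipation term. This gives
\begin{align*}
\sup_{t\le T} J(X_h(t)) \le J(P_hX_0) + \sup_{t\le T}\Big|\int_0^t \inner{J'(X_h)}{P_h\,\dd W}\Big| + \frac12\int_0^T |\Tr(J''(X_h)Q_h)|\,\dd s .
\end{align*}

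The three terms are handled as follows.
\begin{itemize}
\item The trace term is bounded by \eqref{eq:TrJQ}. Its expectation is at most $CK_Q\int_0^T(1+\IE[J(X_h)])\,\dd s$, which is finite by \eqref{JXh}.
\item For the martingale term, use the Burkholder--Davis--Gundy inequality. The quadratic variation is $\int_0^t \norm{Q_h^{\frac12}J'(X_h)}^2\,\dd s$. Since $J'(X_h)\in S_h$, splitting off the mean part gives
\[
\norm{Q_h^{\frac12}J'(X_h)}^2 \le C K_Q\big(\snorm[1]{J'(X_h)}^2+\inner{J'(X_h)}{\varphi_0}^2\big).
\]
Here one bounds $\norm{Q_h^{\frac12}P v}\le \norm[\HS]{A_h^{\frac12}P_hQ^{\frac12}}\,\norm{A_h^{-\frac12}Pv}$ and uses \eqref{AQ}. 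This step needs care: a bound through $\snorm[-1]{\cdot}$ would be better, but $\snorm[1]{J'}$ is what we control.
\item The mean $\inner{J'(X_h)}{\varphi_0}=\inner{f(X_h)}{\varphi_0}$ is bounded by $C(1+\norm[L_4]{X_h}^3)$. Its square is $\lesssim 1+\norm[L_4]{X_h}^6$, which is not controlled by $J$ linearly.
\end{itemize}
This mean term is the main obstacle. I would try first to absorb it by Young's inequality, writing $\IE[(\int_0^T\cdots)^{1/2}] \le \delta\,\IE[\sup_t J] + C_\delta\,\IE\int_0^T(\ldots)$. With $\sqrt{\norm[L_4]{X_h}^6}\le \sup J^{1/2}\cdot(\int J)^{1/2}$-type bounds, only $\sup J$ to the power $1$ and $\int J$ appear, so the term can be absorbed into the left side. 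The $\snorm[1]{J'}^2$ part is controlled directly by \eqref{JXh}. Since $J\ge -C$, the absorption is legitimate after adding a constant. A localization by stopping times handles a priori finiteness for $X_h$; for $X$ one passes through the Galerkin approximation as in Theorem~\ref{lyap-thm}.

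This gives $\IE[\sup_{t\le T}J(X_h(t))]\le C(\IE[J(P_hX_0)],K_Q,T)$. Then \eqref{eq:jx} yields \eqref{eq:3}, with $\IE[J(P_hX_0)]\le C\rho$ by \eqref{x}, and \eqref{eq:2} follows the same way.

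For the probabilistic bounds, apply Chebyshev's inequality to each of the four nonnegative random variables
\begin{itemize}
\item $\sup_t(\norm{\nabla X}^2+\norm[L_4]{X}^4)$,
\item the same quantity for $X_h$,
\item $\sup_t\norm[3]{W_A(t)}^2$,
\item and, for \eqref{corothree}, the mean part $\sup_t\inner{X}{\varphi_0}^2\le C\sup_t\norm[L_4]{X}^2$ (and likewise for $X_h$).
\end{itemize}
Each has expectation at most $K_T$. Taking $\Omega_\epsilon$ as the intersection of the events where each variable is at most $4\epsilon^{-1}K_T$, and enlarging $K_T$, gives \eqref{coroone}--\eqref{corothree}.

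For \eqref{corofour} we need $\IE[\sup_t\norm[3]{W_A(t)}^2]$, whereas Theorem~\ref{Thm3.1} with $\beta=3$ gives only a pointwise-in-$t$ bound. I would use the factorization method of Da Prato--Zabczyk for the stochastic convolution, or a maximal inequality for the analytic semigroup $\ee^{-tA^2}$ in $\dot H^3$. Either bounds $\IE\sup_{t\le T}\snorm[3]{W_A(t)}^2\le C_T\norm[\HS]{A^{\frac12}Q^{\frac12}}^2$. The mean part $(I-P)W(t)$ is handled by Doob's inequality.
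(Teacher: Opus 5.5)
Your treatment of \eqref{eq:3} is sound, and in one respect more careful than the paper's. The paper controls the martingale term by Doob's $L_2$ inequality and the It\^o isometry, and then uses $\inner{J'(u_h)}{\varphi_0}^2\le C(1+\norm[L_4]{u_h}^4)$. That inequality is false: for $u_h\equiv a$ the left side grows like $a^6$ and the right side only like $a^4$. That route would therefore in general need $\IE\int_0^T\norm[L_4]{X_h}^6\,\dd s$, which Theorem~\ref{lyap-thm} does not provide. Your argument closes this correctly. It combines four steps:
\begin{itemize}
\item the Burkholder--Davis--Gundy inequality with exponent $1$;
\item the bound $\int_0^T\norm[L_4]{X_h}^6\,\dd s\le\sup_t\norm[L_4]{X_h}^2\int_0^T\norm[L_4]{X_h}^4\,\dd s$;
\item Young's inequality to absorb $\delta\,\IE[\sup_t J(X_h(t))]$;
\item stopping-time localization to make the absorption legitimate.
\end{itemize}
Intersecting the Chebyshev events, each taken with $\epsilon/4$, is also the right way to get one set $\Omega_\epsilon$ for all the bounds.

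The gap is in \eqref{corofour}. You rightly note that Theorem~\ref{Thm3.1}, which is all the paper invokes there, controls $\IE[\norm[3]{W_A(t)}^2]$ only for each fixed $t$. So Chebyshev yields a set depending on $t$. But neither of your remedies applies.
\begin{itemize}
\item Factorization needs $\int_0^T s^{-2\alpha}\norm[\HS]{A^{\frac32}\ee^{-sA^2}Q^{\frac12}}^2\,\dd s<\infty$ for some $\alpha>0$. The hypothesis gives only $\norm[\HS]{A^{\frac32}\ee^{-sA^2}Q^{\frac12}}^2\le Cs^{-1}\norm[\HS]{A^{\frac12}Q^{\frac12}}^2$, so that integral diverges.
\item Maximal inequalities for stochastic convolutions with contraction semigroups on $\dot H^3$ need $\norm[\HS]{A^{\frac32}Q^{\frac12}}<\infty$.
\end{itemize}

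The assumption is exactly borderline, and there the bound $\IE[\sup_{t\le T}\snorm[3]{W_A(t)}^2]\le C_T\norm[\HS]{A^{\frac12}Q^{\frac12}}^2$ is false. Choose indices $n_k$ with $\log\lambda_{n_k}\ge k^3$. Let $Q$ be diagonal in $\{\varphi_j\}$, with eigenvalue $k^{-2}\lambda_{n_k}^{-1}$ on $\varphi_{n_k}$ and $0$ elsewhere. Then $\norm[\HS]{A^{\frac12}Q^{\frac12}}^2=\sum_k k^{-2}<\infty$. On the other hand, $\snorm[3]{W_A(t)}^2\ge k^{-2}\eta_k(t)^2$ with $\eta_k(t)=\lambda_{n_k}\int_0^t\ee^{-(t-s)\lambda_{n_k}^2}\,\dd\beta_k(s)$. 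This $\eta_k$ is an Ornstein--Uhlenbeck process with rate $\lambda_{n_k}^2$ and variance at most $\frac12$. With probability tending to $1$ as $k\to\infty$, $\sup_{t\le T}\eta_k(t)^2$ is of order $\log(\lambda_{n_k}^2T)\ge 2k^3+\log T$. So $\sup_{t\le T}\snorm[3]{W_A(t)}^2\gtrsim k$ with probability tending to $1$, hence $\sup_{t\le T}\snorm[3]{W_A(t)}=\infty$ almost surely. Therefore \eqref{corofour}, with a single set for all $t\in[0,T]$, cannot be derived from $\norm[\HS]{A^{\frac12}Q^{\frac12}}<\infty$ by any argument.

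You need one of two changes:
\begin{itemize}
\item a stronger hypothesis, e.g.\ $\norm[\HS]{A^{\frac12+\delta}Q^{\frac12}}<\infty$ for some $\delta>0$; then factorization with $2\alpha<\delta$ gives exactly your supremum bound;
\item a weaker conclusion, such as a bound for $\int_0^T\norm[3]{W_A(t)}^q\,\dd t$. This follows from Theorem~\ref{Thm3.1}, equivalence of Gaussian moments, and Chebyshev. For $q>4$ it handles the $W_A$ term in the Gronwall argument of Theorem~\ref{Ynorm3}.
\end{itemize}
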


\begin{proof} We prove the bounds for $X_h$ and $W_A$; the others are
  proved similarly.

From \eqref{eq:cororho} and \eqref{x} there
  follows $\IE[J(P_hX_0)]\le C(1+\rho)$. Using also \eqref{eq:jx} in
  \eqref{J(X_h)}, we obtain
\begin{align*}
   &\IE\Big[\sup_{t\in[0,T]}
   \big(\norm{\nabla X_h(t)}^2 + \norm[L_4]{X_h(t)}^4\big) \Big]
   \\&\qquad
 \le C\big(1+\rho\big)
   +C \,\IE\Big[\sup_{t\in[0,T]}\Abs{
   \int_0^t \inner{J'(X_h(s))}{P_h\,\dd W(s)}}\Big]
   \\&\qquad\quad
   + C\, \IE\Big[\sup_{t\in[0,T]}\Abs{\int_0^t \Tr(J''(X_h(s)Q_h))\,\dd s }\Big].
\end{align*}
The stochastic integral is
$\int_0^t \inner{J'(X_h(s))}{P_h\,\dd W(s)}
=\int_0^t \widetilde{J'(X_h(s))}{P_h\,\dd W(s)}$,
where $\widetilde{J'(X_h(s))}\colon H\to\IR$ is defined by
$\widetilde{J'(X_h(s))}v=\inner{J'(X_h(s))}{v}$.
This integral is a martingale. Hence, we may use H\"older's
inequality, the martingale inequality (\cite[Theorem
3.8]{DaPratoZabczyk}), and the It\^o isometry (\cite[Corollary
4.14]{DaPratoZabczyk}) to get
\begin{align*}
  \begin{split}
&\Big(\IE\Big[\sup_{t\in[0,T]}\Abs{
 \int_0^t \widetilde{J'(X_h(s))}{P_h\,\dd W(s)}}\Big]\Big)^2
\le
 \IE\Big[\sup_{t\in[0,T]}\Abs{
 \int_0^t \widetilde{J'(X_h(s))}{P_h\,\dd W(s)}}^2\Big]
\\& \quad
\le
4 \sup_{t\in[0,T]}\IE\Big[
\Abs{
 \int_0^t \widetilde{J'(X_h(s))}P_h\,\dd W(s)}^2\Big]
=
4 \sup_{t\in[0,T]}\IE\Big[
\int_0^t \norm[\HS]{\widetilde{J'(X_h(s))}Q_h^{\frac12}}^2\,\dd s\Big]
\\& \quad
=
4 \IE\Big[
\int_0^T \norm[\HS]{\widetilde{J'(X_h(s))}Q_h^{\frac12}}^2\,\dd s\Big].
\end{split}
\end{align*}
Here, by \eqref{eq:nondotnorm},
\begin{align*}
  \norm[\HS]{\widetilde{J'(X_h(s))}Q_h^{\frac12}}^2
  &\le
 \norm{J'(X_h(s))}^2\norm[\HS]{Q_h^{\frac12}}^2
 \\&
\le
\big(\snorm[0]{J'(X_h(s))}^2+\inner{J'(X_h(s))}{\varphi_0}^2 \big)
\Tr(Q)
 \\&
  \le
C\big(1+\snorm[1]{J'(X_h(s))}^2+J(X_h(s)) \big)
\Tr(Q),
\end{align*}
where we used \eqref{eq:JP} and the (rough) bounds
$\snorm[0]{u}\le\snorm[1]{u}$ and
\begin{align*}
  &\inner{J'(u_h)}{\varphi_0}^2
 =|\cD|^{-1}\Big(\int_\cD\big(A_hu_h+P_h(u_h^3-u_h)\big)\,\dd x \Big)^2
  \\&\qquad
=|\cD|^{-1}\Big(\int_\cD P_h(u_h^3-u_h)\,\dd x \Big)^2
\le C\big(1+\norm[L_4]{u_h}^4\big)
\le C\big(1+J(u_h)\big).
\end{align*}
By using \eqref{JXh}, we conclude that
\begin{align*}
&\Big(\IE\Big[\sup_{t\in[0,T]}\Abs{
 \int_0^t \widetilde{J'(X_h(s))}{P_h\,\dd W(s)}}\Big]\Big)^2
\\&\qquad
\le C \Tr(Q)\Big( T +\IE\Big[\int_0^T \snorm[1]{J'(X_h(s))}^2 \,\dd s \Big]
+T\sup_{s\in[0,T]}\IE\big[J(X_h(s))\big] \Big)
\le K_T^2.
\end{align*}
Next, using \eqref{eq:TrJQ} and H\"older's inequality, we have
\begin{align*}
&\IE\Big[\sup_{t\in[0,T]}
\Abs{\int_0^t \Tr(J''(X_h(s)Q_h))\,\dd s }\Big]
\le
\IE\Big[\int_0^T |\Tr(J''(X_h(s)Q_h))|\,\dd s \Big]
\\&\qquad
\le
CK_Q\IE\Big[\int_0^T \big(1+J(X_h(s))^{\frac12}\big)\,\dd s \Big]
\\&\qquad
\le
CK_QT \big(1+\sup_{s\in[0,T]}\IE[J(X_h(s))] \big)
\le K_T,
\end{align*}
which finishes the proof of \eqref{eq:3}.

In order to prove \eqref{corotwo} we denote
\begin{align*}
F_h=\sup_{t\in[0,T]}\big(\norm{\nabla X_h(t)}^2+\norm[L_4]{X_h(t)}^4\big).
\end{align*}
We apply Chebyshev's inequality and \eqref{eq:3} to get, for every $\alpha > 0$,
\begin{equation*}
\begin{split}
&\IP\Big(\big\lbrace \omega \in \Omega:  F_h > \alpha \big\rbrace \Big)
\le \frac{1}{\alpha} \IE\big[F_h\big]
\le \frac{K_T}{\alpha} .
\end{split}
\end{equation*}
We choose $\alpha = \epsilon^{-1}K_T$ and set $\Omega_\epsilon=\big
\lbrace \omega \in \Omega : F_h \le \epsilon^{-1}K_T \big\rbrace$.
Then
\begin{equation*}
\IP(\Omega_\epsilon)
= 1- \IP\Big(\big\lbrace \omega \in \Omega: F_h > \alpha \big\rbrace \Big)
\ge 1-\epsilon
\end{equation*}
and \eqref{corotwo} holds.
For \eqref{corothree} we
note that
\begin{align*}
\norm[1]{u}^2
\le \norm{\nabla u}^2 + \norm{u}^2
\le \norm{\nabla u}^2 + C\big(1+\norm[L_4]{u}^4\big)
\end{align*}
and hence \eqref{corothree} follows from \eqref{eq:2} and \eqref{eq:3}
after an adjustment of $K_T$.  Finally, \eqref{corofour}
follows in a similar way from Theorem \ref{Thm3.1} with $\beta =3$
with a constant which can be absorbed in $K_T$.
\end{proof}

\section{Regularity of the solution}
We quote the following from \cite{Prato}.  There it is assumed that
$A$ and $Q$ commute and that the eigenfunctions of $A$ are uniformly
bounded in the sup norm but it can be verified that these are not
necessary for the following result.  Recall the definitions of weak
and mild solutions in \eqref{weaksol} and \eqref{mildC-H-C-X}.

\begin{thm} \label{pratothm} Let $T>0$ and assume that
  $\Tr(A^{\delta-1}Q)<\infty$ for some $\delta>0$ and that $X_0$ is
  $\cF_0$-measurable with values in $H$.  Then there is a unique weak
  solution $X$ of \eqref{C-H-C-X}.
\end{thm}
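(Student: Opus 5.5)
Following \cite{Prato}, I would reduce existence and uniqueness of a weak solution to a pathwise problem via the splitting $X=Y+W_A$. The first step is to show that, under $\Tr(A^{\delta-1}Q)<\infty$, the stochastic convolution $W_A$ has a version with $W_A\in C([0,T],L_\infty)$ (or at least $C([0,T],L_6)$ with enough room) almost surely. I would not use the commuting eigenbasis for this. Instead I would use the factorization method of \cite{DaPratoZabczyk}: write $W_A=\frac{\sin\pi\alpha}{\pi}\int_0^t(t-s)^{\alpha-1}\ee^{-(t-s)A^2}Z_\alpha(s)\,\dd s$, bound $\IE\norm[\dot H^{\gamma}]{Z_\alpha(s)}^{2m}$ via Gaussianity by powers of $\int_0^s\sigma^{-2\alpha}\norm[\HS]{A^{\gamma/2}\ee^{-\sigma A^2}Q^{\frac12}}^2\,\dd\sigma$, and use $\norm[\HS]{A^{\gamma/2}\ee^{-\sigma A^2}Q^{\frac12}}^2=\Tr(A^{\gamma}\ee^{-2\sigma A^2}Q)$. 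Here $\Tr(A^{\delta-1}Q)<\infty$ lets me take $\gamma$ slightly above $d/2\cdot$ (after using smoothing $\sigma^{-(\gamma-\delta+1)/2}$) so that the $\sigma$-integral converges for small $\alpha$. Only traces appear in this argument, which is why commutation is unnecessary. This gives $W_A\in C([0,T],\dot H^{\gamma}+\mathrm{span}\{\varphi_0\})$ a.s. with $\gamma>d/2$ attainable after smoothing, hence $L_\infty$ continuity; the constant mode $(I-P)W(t)$ is a scalar Brownian motion.

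Second, for fixed $\omega$, I would solve \eqref{tran-Cahn-Hill} for $Y$, with $W_A$ a given continuous bounded function. Local existence in $H$ (or $\dot H^{-1}$ plus mean) follows by a contraction in the mild formulation, using \eqref{eq:analytic} with $\norm{A\ee^{-tA^2}}\le Ct^{-1/2}$ and the local Lipschitz property of $f$. For global existence I would use an a priori energy estimate: test with $A^{-1}PY$ (the mean of $Y$ is conserved) to get
\[
\tfrac12\frac{\dd}{\dd t}\norm[-1]{PY}^2+\snorm[1]{Y}^2+\inner{f(Y+W_A)}{PY}=0,
\]
then use $f(Y+W_A)Y\ge \tfrac12 Y^4 - C(1+\norm[L_\infty]{W_A}^4)$ by monotonicity of the cubic and Young's inequality. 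This yields a bound on $\norm[-1]{Y}$, $\int\snorm[1]{Y}^2$ and $\int\norm[L_4]{Y}^4$, and then, via the mild formula and the generalized Gronwall Lemma~\ref{GWlem}, an $H$-bound. Uniqueness follows from the same monotonicity: for two solutions the difference $e$ satisfies $\frac12\frac{\dd}{\dd t}\norm[-1]{e}^2+\snorm[1]{e}^2\le\norm{e}^2\le \snorm[1]{e}\norm[-1]{e}$, since $f'\ge-1$, and Gronwall closes.

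Third, I would set $X=Y+W_A$, check adaptedness (measurability from the construction as a limit of Picard iterates), and pass from the mild formulation to the weak formulation \eqref{weaksol} by testing with $v\in D(A^2)$ and using the standard equivalence (stochastic Fubini) from \cite{DaPratoZabczyk}. The main obstacle I expect is the first step: obtaining pathwise continuity of $W_A$ in a space strong enough to control the cubic nonlinearity in $d=3$ using only the trace condition, without the sup-norm bounds on the eigenfunctions. If $L_\infty$ is out of reach, I would fall back to $C([0,T],L_6)$ bounds on $W_A$ and redo the Young-inequality estimate with $\norm[L_6]{W_A}$, which still suffices for the $L_4$ energy argument.
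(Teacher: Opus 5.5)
The paper gives no proof of this theorem. It quotes the result from \cite{Prato} and only asserts that the commutation of $A$ and $Q$, and the sup-norm bounds on the eigenfunctions, can be dropped. Your outline is close to the strategy of \cite{Prato}: the splitting $X=Y+W_A$, a pathwise $\dot H^{-1}$ energy estimate for \eqref{tran-Cahn-Hill}, and uniqueness from $f'\ge-1$.

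Your factorization argument is a sound way to see that commutation is not needed for $W_A$, since only quantities like $\norm[\HS]{A^{(\delta-1)/2}Q^{1/2}}$ enter. One correction: it gives $W_A\in C([0,T],\dot H^\gamma)$ only for $\gamma<1+\delta$, so $\gamma>d/2$ is not available when $d=3$ and $\delta\le\frac12$. This is harmless. $\Tr(Q)<\infty$ is a standing assumption, so the hypothesis holds with $\delta=1$ and every $\gamma<2$ is available. Also, your $\dot H^{-1}$ estimate actually needs only $\norm[L_4]{W_A}$.

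However, Step 2 has two genuine gaps.

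\textbf{Local existence.} A contraction in $H$ (still less in $\dot H^{-1}$) is not available. $f(u)=u^3-u$ is not locally Lipschitz on $H=L_2$: for $u\in L_2$ the cube $u^3$ need not even be integrable. Moreover, in $d=3$, $L_2$ lies below the scaling-critical regularity $\dot H^{1/2}$ of the cubic equation under $u_\lambda(t,x)=\lambda u(\lambda^4t,\lambda x)$. So a fixed-point argument with data merely in $H$ cannot be expected to work. Existence has to come from either
\begin{itemize}
\item Galerkin approximation, as in \cite{Prato}, combined with your a priori $\dot H^{-1}$ bound and compactness, or
\item monotone-operator theory in $\dot H^{-1}$.
\end{itemize}
Adaptedness then follows from pathwise uniqueness, not from Picard iterates.

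\textbf{The $H$-bound.} The $H$-bound ``via the mild formula and Lemma~\ref{GWlem}'' does not close. After the $\dot H^{-1}$ estimate you control only $Y\in L_\infty(0,T;\dot H^{-1})\cap L_2(0,T;H^1)\cap L_4(0,T;L_4)$. With $Z=Y+W_A$, the best the mild formula gives is $\norm{Y(t)}\le\norm{X_0}+C\int_0^t(t-s)^{-3/4}\big(1+\norm[L_6]{Z(s)}^2\big)\norm{Z(s)}\,\dd s$. The coefficient $1+\norm[L_6]{Z}^2$ is only in $L_1(0,T)$. Lemma~\ref{GWlem} needs a constant $B$, and against the kernel $(t-s)^{-3/4}$ one would need $L_p$ with $p>4$. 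Without an $H$-bound you also do not get the continuity in $H$ that the definition of weak solution requires.

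What does work is an $L_2$ energy estimate. Testing with $PY$ gives $\frac12\frac{\dd}{\dd t}\norm{PY}^2+\snorm[2]{Y}^2+3\int_\cD Z^2|\nabla Y|^2\,\dd x=-3\int_\cD Z^2\nabla W_A\cdot\nabla Y\,\dd x+\inner{\nabla Z}{\nabla Y}$. After absorbing half of the good term, the right-hand side is bounded by $C\norm[L_6]{Z}^2\norm[L_3]{\nabla W_A}^2+\snorm[1]{Z}\snorm[1]{Y}$. This is integrable in time once $W_A\in C([0,T],H^{3/2})$ for $d=3$, which is where the extra spatial regularity of $W_A$ is really used. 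It yields $Y\in L_\infty(0,T;H)\cap L_2(0,T;H^2)$, and continuity in $H$ then follows by standard arguments.

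\textbf{Uniqueness class.} Your monotonicity argument gives uniqueness only among solutions with $X-W_A\in L_2(0,T;H^1)\cap L_4(0,T;L_4)$, which is needed to test with $A^{-1}e$. That regularity is not part of \eqref{weaksol}. You must either build it into the solution class or show that every weak solution has it.
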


\begin{coro}\label{mildxy}
  Assume that $\norm[\HS]{A^{\frac12}Q^{\frac12}}<\infty$ and that
  $X_0$ is $\cF_0$-measurable with values in $H^1$ satisfying
  $\norm[L_2(\Omega,H^1)]{X_0}^2 + \norm[L_4(\Omega,L_4)]{X_0}^4 \le
  \rho$ for some $\rho$. Then the weak solution $X$ of
  \eqref{C-H-C-X} is also a mild solution.
\end{coro}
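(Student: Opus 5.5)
We will show that the weak solution from Theorem \ref{pratothm} satisfies the mild formulation \eqref{mildC-H-C-X}, using the standard weak-to-mild argument of \cite{DaPratoZabczyk}. First, the hypotheses of Theorem \ref{pratothm} hold. Indeed, with $\delta=2$,
\[
\Tr(A^{\delta-1}Q)=\Tr(AQ)=\norm[\HS]{A^{\frac12}Q^{\frac12}}^2<\infty ,
\]
so a unique weak solution $X$ exists. Corollary \ref{coro2.5} then supplies the integrability we need: almost surely $\sup_{t\in[0,T]}\big(\norm[1]{X(t)}^2+\norm[L_4]{X(t)}^4\big)<\infty$. Since $H^1\subset L_6$ for $d\le3$, this gives $f(X)\in L_\infty([0,T],L_2)$ almost surely.

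Next we extend the weak identity \eqref{weaksol} to time-dependent test functions $\zeta\in C^1([0,T],D(A^2))$. This is the standard Itô-product (or Riemann-sum) argument:
\begin{align*}
\inner{X(t)}{\zeta(t)} &= \inner{X_0}{\zeta(0)}
+\int_0^t \big(\inner{X(s)}{\zeta'(s)-A^2\zeta(s)} - \inner{f(X(s))}{A\zeta(s)}\big)\,\dd s \\
&\quad +\int_0^t\inner{\dd W(s)}{\zeta(s)} .
\end{align*}
Now fix $t$, take $v\in D(A^2)$ and set $\zeta(s)=\ee^{-(t-s)A^2}v$. Then $\zeta'(s)=A^2\zeta(s)$, so the first integrand cancels. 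This leaves
\begin{align*}
\inner{X(t)}{v} = \inner{\ee^{-tA^2}X_0}{v} - \int_0^t\inner{A\ee^{-(t-s)A^2}f(X(s))}{v}\,\dd s + \inner{W_A(t)}{v},
\end{align*}
where we used that $\ee^{-tA^2}$ and $A$ are selfadjoint and commute. Since $D(A^2)$ is dense in $H$, the mild equation follows almost surely for each $t$. Both sides are continuous in $t$ (see below), so we can take a single null set for all $t\in[0,T]$ by restricting to rational $t$ and passing to the limit.

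The required integrability holds for the following reasons.
\begin{itemize}
\item[(i)] By \eqref{eq:analytic} with $\alpha=1$,
\[
\norm{A\ee^{-(t-s)A^2}f(X(s))}\le C(t-s)^{-\frac12}\norm{f(X(s))},
\]
which is integrable in $s$ on $[0,t]$ almost surely. The deterministic convolution is therefore in $L_1$ and continuous in $t$.
\item[(ii)] $W_A$ exists and has a continuous version; Theorem \ref{Thm3.1} gives its regularity, and continuity follows from the factorization method of \cite{DaPratoZabczyk}.
\item[(iii)] The weak form requires $\inner{f(X)}{Av}\in L_1$, which the bound in (i) also covers.
\end{itemize}

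The main obstacle is making the time-dependent test function argument rigorous, since $f$ is only locally Lipschitz. We plan to handle this by expanding along the eigenbasis $\varphi_j$. For each $j$, the coordinate process $\inner{X(t)}{\varphi_j}$ satisfies a scalar equation, obtained from \eqref{weaksol} with $v=\varphi_j$, which we solve by variation of constants with the factor $\ee^{-\lambda_j^2 t}$. This gives the mild identity coordinatewise. We then sum over $j$, using the a.s.\ bound $f(X)\in L_\infty(0,T;L_2)$ from Corollary \ref{coro2.5} to justify convergence in $H$.
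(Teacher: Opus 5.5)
Your proof is correct, but it runs in the opposite direction from the paper's.

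\textbf{The paper's route.} The paper starts from the mild formula. It forms the candidate $\tilde X(t)=\ee^{-tA^2}X_0-\int_0^tA\ee^{-(t-s)A^2}f(X(s))\,\dd s+W_A(t)$, with the given weak solution $X$ inside $f$. On $\Omega_\epsilon$, where $\norm{f(X(s))}\le C$ by Corollary \ref{coro2.5} and Sobolev's inequality, it checks that $\tilde X$ satisfies \eqref{weaksol}. For this it quotes the linear Da Prato--Zabczyk result that $W_A$ is the weak solution of $\dd Z+A^2Z\,\dd t=\dd W$, and it handles the deterministic part by ``standard arguments''. It then invokes uniqueness of weak solutions to conclude $X=\tilde X$; in effect this is uniqueness for the homogeneous linear problem satisfied by $X-\tilde X$. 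Finally it lets $\epsilon\to0$.

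\textbf{Your route.} You derive the mild identity directly from the weak one. Concretely, for each $j$ you solve the scalar linear SDE for $\inner{X(t)}{\varphi_j}$ by variation of constants with $\ee^{-\lambda_j^2t}$, and read off the $j$-th coordinate of the mild formula. This needs no uniqueness statement at all. It is fully elementary thanks to the discrete spectrum of $A$. It also makes clear that the only property of the nonlinearity used is $f(X)\in L_\infty(0,T;H)$ almost surely. So the local Lipschitz character of $f$, which you flag as the main obstacle, is not really one: $f(X)$ enters only as a fixed forcing term. Nor do you need to ``sum over $j$'': once all coordinates of two $H$-valued random variables agree outside a common null set, the variables agree. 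The paper's route is shorter on the page because it outsources both halves to known results, but it leaves the deterministic verification implicit.

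\textbf{Two minor points.} First, your identification $\Tr(AQ)=\norm[\HS]{A^{\frac12}Q^{\frac12}}^2$ needs either $AQ$ nuclear or the trace computed in the eigenbasis of $A$. It is cleaner to take $\delta=1$, as the paper does, so that \eqref{KQ} gives $\Tr(Q)<\infty$. Second, for general $v\in D(A^2)$ the test function $s\mapsto\ee^{-(t-s)A^2}v$ does not lie in $C^1([0,t],D(A^2))$; that requires $v\in D(A^4)$. Your eigenfunction version with $v=\varphi_j$ avoids this issue.
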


\begin{proof}
  By \eqref{KQ}, the condition
  $\norm[\HS]{A^{\frac12}Q^{\frac12}}<\infty$ implies
  $\Tr(A^{\delta-1}Q)<\infty$ with $\delta=1$ and hence there is a
  unique weak solution $X$ of \eqref{C-H-C-X} by Theorem
  \ref{pratothm}.  Let $\epsilon>0$ and $\Omega_\epsilon$ the set
  defined in Corollary \ref{coro2.5}. We first show that $X$ satisfies
  \eqref{mildC-H-C-X} on $\Omega_\epsilon$.  By the uniqueness weak
  solutions of \eqref{C-H-C-X}, we only have to show that the
  right-hand side of \eqref{mildC-H-C-X} satisfies \eqref{weaksol} on
  $\Omega_\epsilon$. Since $W_A$ is the unique weak solution of
  $\dd Z+A^2Z\,\dd t=\dd W,~Z(0)=0$ (by \cite[Theorem
  5.4]{DaPratoZabczyk}), it is enough to show that
$$
  Y(t):=\ee^{-tA^2}X_0
   - \int_0^t A\ee^{-(t-s)A^2}f(X(s))\,\dd s
$$
satisfies
$$
\inner{Y(t)}{v} - \inner{X_0}{v}
+ \int_0^t \big(\inner{Y(s)}{A^2 v}
+ \inner{f(X(s))}{A v}\big)\,\dd s
=0\text{ on }\Omega_\epsilon,~v\in D(A^2).
$$
This follows by standard arguments (see, e.g., \cite{Ball}) as, by
Sobolev's inequality and Corollary \ref{coro2.5}, there is $C$
depending on $\epsilon$, $\rho$, $Q$, and $T$ such that
$$
\|f(X(s)\|\le C (\|X(s)\|_{L^2}+\|X(s)\|_{L^6}^3)
\le C (\|X(s)\|_{L^2}+\|X(s)\|_{H^1}^3)\le C
$$
for $t\in [0,T],\ \omega\in \Omega_\epsilon$. This also shows that the
integrand of the deterministic integral in \eqref{mildC-H-C-X} is in
$L_1([0,T],H)$ on $\Omega_\epsilon$ by the analyticity of the
semigroup $\ee^{-tA^2}$. Finally, since $\epsilon>0$ is arbitrary and
$P(\Omega_\epsilon)>1-\epsilon$, the statement follows.
\end{proof}

We now show that, under the stronger assumption
$\norm[\HS]{A^{\frac12}Q^{\frac12}}<\infty$, the solution $X(t)$ is
actually in $H^3$ almost surely.  In order to do this we
write, as in the previous proof, $ X(t) = Y(t) + W_A(t) $, where we
already know from Theorem \ref{Thm3.1} that $ W_A(t) $ is in $H^3$
almost surely.  The regularity of $Y$ is studied in the next
theorem.  Note that we saw in the proof of Corollary \ref{mildxy} that
\begin{align*}
Y(t) = X(t)-W_A(t)
=  \ee^{-tA^2}X_0 - \int_0^t A\ee^{-(t-s)A^2}f(X(s))\,\dd s
\end{align*}
is a weak solution of
\begin{equation}\label{tran-Cahn-Hillx}
 \dot{Y} + A^2 Y + A f(X) = 0,\quad t\in(0,T];  \quad
 Y(0) =X_0,
\end{equation}
almost surely.

\begin{thm}\label{Ynorm3}
  Assume that $\norm[\HS]{A^{\frac12}Q^{\frac12}}<\infty$ and that
  $X_0$ is $\cF_0$-measurable with values in $H^3$ satisfying
  $\norm[L_2(\Omega,H^1)]{X_0}^2 + \norm[L_4(\Omega,L_4)]{X_0}^4 \le
  \rho$.  Let $T>0$ and $\epsilon \in (0,1)$ and let $\Omega_\epsilon$
  and $K_T$ be as in Corollary \ref{coro2.5}. Let $X$ be the solution
  from Theorem \ref{pratothm} and $Y=X-W_A$.  Then $X,Y\in
  C([0,T],H)\cap L_{\infty}([0,T],H^3)$ almost surely, and, for each
$\omega\in\Omega_\epsilon$,
\begin{align}
\norm[3]{Y(t)} &\le C(\norm[3]{X_0}, \epsilon^{-1}K_T,T)
\quad \text{on}\  \Omega_\epsilon,\ t \in [0,T],\label{nyh3}\\
\norm[3]{X(t)} &\le C(\norm[3]{X_0}, \epsilon^{-1}K_T,T)
\quad \text{on}\  \Omega_\epsilon,\ t \in [0,T].\label{nxh3}
\end{align}
\end{thm}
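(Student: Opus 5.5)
We work pathwise on $\Omega_\epsilon$ and use the mild representation of $Y$ from Corollary \ref{mildxy}:
\begin{align*}
Y(t)=\ee^{-tA^2}X_0-\int_0^t A\ee^{-(t-s)A^2}f(X(s))\,\dd s,\qquad X=Y+W_A .
\end{align*}
On $\Omega_\epsilon$, Corollary \ref{coro2.5} gives $\norm[1]{X(t)}^2\le\epsilon^{-1}K_T$ and $\norm[3]{W_A(t)}^2\le\epsilon^{-1}K_T$ for $t\in[0,T]$. The nonlinear term is therefore under control in lower norms, and we bootstrap the regularity of $Y$ up to $H^3$.

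\textbf{Step 1 (an intermediate bound).} We begin with a bound below $H^3$. By Sobolev's inequality, $\norm{f(X(s))}\le C(\norm{X}+\norm[1]{X}^3)\le C(\epsilon^{-1}K_T)$ on $\Omega_\epsilon$. Using \eqref{eq:analytic} with the factor $A^{1+\frac{\alpha}{2}}$, we get
\begin{align*}
\snorm[\alpha]{Y(t)}\le \snorm[\alpha]{X_0}+C\int_0^t (t-s)^{-\frac12-\frac{\alpha}{4}}\,\dd s ,
\end{align*}
and the integral is finite for $\alpha<2$. This gives $Y\in L_\infty([0,T],\dot H^{\alpha})$ for every $\alpha<2$, and in particular $\norm[2-\delta]{X(t)}\le C$ on $\Omega_\epsilon$. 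The mean of $Y$ is $(I-P)X_0$, so it is trivially controlled.

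\textbf{Step 2 (the $H^3$ estimate).} To reach $H^3$, apply $A^{\frac32}$ and split the operators as $A^{\frac32}A=A^{\frac12}\cdot A$, so that
\begin{align*}
\snorm[3]{Y(t)}\le \snorm[3]{X_0}+\int_0^t \norm{A^{\frac12}\ee^{-(t-s)A^2}}\,\norm{\Delta f(X(s))}\,\dd s .
\end{align*}
Here we need $f(X)\in D(A)$, i.e.\ that $\partial_n f(X)=f'(X)\partial_n X=0$, which holds since $X\in D(A)$ in the relevant sense; alternatively we argue by duality with the selfadjoint $A$. By \eqref{eq:analytic} the first factor is bounded by $C(t-s)^{-\frac14}$. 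By \eqref{Deltaf},
\begin{align*}
\norm{\Delta f(X)}\le C(1+\norm[1]{X}^2)\norm[3]{X}\le C(\epsilon^{-1}K_T)\big(\norm[3]{Y}+\norm[3]{W_A}\big).
\end{align*}
Setting $\varphi(t)=\norm[3]{Y(t)}$, this yields
\begin{align*}
\varphi(t)\le C+C\int_0^t (t-s)^{-\frac14}\varphi(s)\,\dd s .
\end{align*}
The generalized Gronwall Lemma \ref{GWlem}, with $\alpha=1$ and $\beta=\frac34$, then gives \eqref{nyh3}. Adding \eqref{corofour} gives \eqref{nxh3}.

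\textbf{Main obstacle.} Lemma \ref{GWlem} requires $\varphi\in L_1([0,T])$ a priori, and Step 1 does not provide this. We would handle it first by a cutoff argument. Run the same estimate for the Galerkin (spectral) truncations $Y_N$, where all norms are finite; make sure the bounds are uniform in $N$; and pass to the limit by lower semicontinuity. A second route is to first prove $\snorm[3-\delta]{Y}$ bounded by the same interpolation (with exponent $(t-s)^{-\frac14-\frac{\delta}{4}}$ and $\norm{\Delta f}$ replaced by a fractional-order estimate), and then close the estimate.

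\textbf{Continuity.} $Y\in C([0,T],H)$ follows from the strong continuity of the semigroup and the integrability of the kernel; $W_A$ is continuous by the cited results. Since $\IP(\Omega_\epsilon)\ge1-\epsilon$ with $\epsilon$ arbitrary, the almost sure statements follow.
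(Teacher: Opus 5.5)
Your Step 2 is the paper's proof: the mild formula for $Y$, the smoothing estimate \eqref{eq:analytic}, the bound \eqref{Deltaf}, the pathwise bounds of Corollary~\ref{coro2.5}, and Lemma~\ref{GWlem}. However, your operator count is wrong. The integrand already carries one $A$, so $\snorm[3]{A\ee^{-(t-s)A^2}f(X(s))}=\norm{A^{\frac32}\ee^{-(t-s)A^2}Af(X(s))}$. After one $A$ is moved onto $f$, the semigroup still carries $A^{\frac32}$, not $A^{\frac12}$. Hence \eqref{eq:analytic} gives the kernel $C(t-s)^{-\frac34}$, not $C(t-s)^{-\frac14}$. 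Since $\frac34<1$ the kernel is still integrable, and Lemma~\ref{GWlem} applies with $\alpha=1$, $\beta=\frac14$, exactly as in the paper, so the conclusion survives. Your Step 1 is correct, but the paper does not use it.

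Your ``main obstacle'' is a genuine point that the paper passes over. The paper applies Lemma~\ref{GWlem} to $\norm[3]{Y(t)}$, and writes $\norm{Af(X)}=\norm{\Delta f(X)}$, without first knowing that $X(s)\in H^3$ or even $f(X(s))\in D(A)$. Your Galerkin remedy does not work as stated. Truncating only $Y$ does not close, because the uniform-in-$N$ bound on $\norm{P_NAf(X)}$ still involves $\norm[3]{X}$ rather than the truncated quantity. Truncating the nonlinear equation instead needs $H^1$ bounds uniform in $N$ on a common set, which Corollary~\ref{coro2.5} does not supply, plus a separate convergence argument $Y_N\to Y$.

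The bootstrap route works and actually makes Gronwall unnecessary:
\begin{enumerate}
\item Step 1 gives $X$ bounded in $H^{\alpha}$, $\alpha\in(\frac32,2)$, on $\Omega_\epsilon$, hence bounded in $L_\infty$. So $\norm{\nabla f(X)}\le C(1+\norm[L_\infty]{X}^2)\norm{\nabla X}\le C$.
\item Then $\snorm[\gamma]{A\ee^{-(t-s)A^2}f(X(s))}\le C(t-s)^{-\frac{\gamma+1}{4}}\norm{\nabla f(X(s))}$ gives $Y$ bounded in $\dot H^{\gamma}$ for every $\gamma<3$.
\item Therefore $X$ is bounded in $H^2$ with $\partial X/\partial n=0$, so $f(X)\in D(A)$ with $\norm{\Delta f(X)}$ bounded.
\item The $(t-s)^{-\frac34}$ estimate now yields \eqref{nyh3} directly.
\end{enumerate}
Note that an $H^{3-\delta}$ bound by itself still would not give the a priori integrability of $\norm[3]{Y}$ that Lemma~\ref{GWlem} needs. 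What closes your second route is this final direct step, not Gronwall.
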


\begin{proof}
  The continuity of $X$ is already contained in Theorem
  \ref{pratothm} and the continuity of $Y$ follows from the
  continuity of $X$ and $W_A$. To show that $X,Y\in
  L_{\infty}([0,T],H^3)$ almost surely it is enough to show
  \eqref{nyh3} and \eqref{nxh3} as $\epsilon>0$ is arbitrary and
  $P(\Omega_\epsilon)>1-\epsilon.$ Let $t\in[0,T]$ and $\omega
  \in\Omega_\epsilon$.  From Corollary \ref{coro2.5} we have
\begin{equation}\label{K1}
\norm[1]{X(t)}^2   \le  \epsilon^{-1} K_T ,\quad
 \norm[3]{W_A(t)} \le \epsilon^{-1} K_T.
\end{equation}
We take seminorms in
\begin{equation}\label{mildY}
Y(t) = \ee^{-tA^2}X_0 - \int_0^t A\ee^{-(t - s)A^2}f(X(s))\,\dd s
\end{equation}
and use \eqref{eq:analytic} to get
\begin{align*}
\snorm[3]{Y(t)}
& \le \snorm[3]{\ee^{-tA^2} X_0} + \int_0^t \snorm[3]{\ee^{-(t-s)A^2} Af(X(s))}\,\dd s\\
& = \norm{\ee^{-tA^2} A^{\frac{3}{2}}X_0} + \int_0^t \norm{A^{\frac{3}{2}}\ee^{-(t-s)A^2} Af(X(s))}\,\dd s\\
& \le \snorm[3]{X_0} + C\int_0^t (t-s)^{-\frac{3}{4}} \norm{Af(X(s))}\,\dd s.
\end{align*}
We apply \eqref{Deltaf} to $\norm{Af(X(s))}=\norm{\Delta f(X(s))} $ to get
\begin{align*}
\snorm[3]{Y(t)}
& \le \snorm[3]{X_0} + C \int_0^t (t-s)^{-\frac{3}{4}}
\big( 1+ \norm[1]{X(s)}^2\big) \norm[3]{X(s)}\,\dd s\\
& \le \snorm[3]{X_0} + C \int_0^t (t-s)^{-\frac{3}{4}}
\big( 1+ \norm[1]{X(s)}^2\big) \big(\norm[3]{Y(s)}+\norm[3]{W_A(s)}\big) \,\dd s.
\end{align*}
Since $(I-P)Y(t)=(I-P)X_0$ is constant, we get the same bound for the
norm $\norm[3]{Y(t)}$.
Using also \eqref{K1} gives
\begin{align*}
\norm[3]{Y(t)}
&\le \norm[3]{X_0} + C \int_0^t (t-s)^{-\frac{3}{4}}
\big(1+\epsilon^{-1}K_T\big)\big(\norm[3]{Y(s)} + \epsilon^{-1}K_T\big)\,\dd s
\\&
\le \norm[3]{X_0} + C
\epsilon^{-1}K_T\big(1+\epsilon^{-1}K_T\big)T^{\frac{1}{4}}
\\&\quad
+ C \big(1+\epsilon^{-1}K_T\big)\int_0^t (t-s)^{-\frac{3}{4}} \norm[3]{Y(s)}\,\dd s.
\end{align*}
Applying Gronwall's Lemma \ref{GWlem} with $\alpha = 1,\,\beta = \frac{1}{4}$ and
\begin{align}\label{A1B}
A = \norm[3]{X_0} + C
\epsilon^{-1}K_T\big(1+\epsilon^{-1}K_T\big),\
B = C \big(1+\epsilon^{-1}K_T\big),
\end{align}
gives
\begin{align*}
\norm[3]{Y(t)} \le A C(B,T) = C(\norm[3]{X_0},\epsilon^{-1}K_T,T), \quad t \in [0,T].
\end{align*}
The bound for $\norm[3]{X(t)}$ then follows in view of \eqref{K1}.
\end{proof}

The constant $C(\norm[3]{X_0},\epsilon^{-1}K_T,T)$ grows rapidly with
$\epsilon^{-1}K_T$ and $T$. Hence, it is important that $K_T$ grows
only quadratically with $T$. Also note that the proof of Theorem
\ref{Ynorm3} shows that under the assumptions of the theorem, in fact,
$f(X(t))\in D(A)$ almost surely and $\|Af(X(t))\|<\infty$ almost
surely for $t\in [0,T]$. Therefore, $X$ satisfies a more strict (in
comparison to \eqref{mildC-H-C-X}) mild form of \eqref{C-H-C-X}:
\begin{align*}
X(t) =  \ee^{-tA^2}X_0 - \int_0^t \ee^{-(t-s)A^2}Af(X(s))\,\dd s
+ \int_0^t \ee^{-(t-s)A^2}\,\dd W(s).
\end{align*}

\section{Error estimates}
\subsection{The linear deterministic Cahn-Hilliard equation}

Consider the linear Cahn-Hilliard equation
\begin{equation}\label{A1}
\dot{u} +A v = 0, \
v - Au - f = 0, \ t>0; \  u(0) = u_0,
\end{equation}
where $f$ is a function of $x,t$, and  the corresponding finite element problem
\begin{equation}\label{A2}
 \dot{u}_{h} +A_h v_h = 0, \ 
 v_h - A_hu_h - P_h f = 0, \ t>0; \ u_h(0) = P_h u_0.
\end{equation}

We have the following error estimate.  We will later use this for
fixed $\omega\in\Omega_\epsilon$ with $f$ replaced by $f(X)$ and $u$
by the solution $Y$ of \eqref{tran-Cahn-Hillx}. The error estimate
differs from the corresponding error estimates in \cite{Stig,Ali} in that
it contains no time derivative. This is important since $Y$ has
limited temporal regularity.

\begin{thm}\label{Athm1}
  Assume that $ u,v $ and $ u_h,v_h $ are weak solutions of \eqref{A1}
  and \eqref{A2}, respectively. Then, for $ t\ge0$ and $h\in(0,\frac12]$, we
  have
\begin{align}\label{A3}
\norm{u_h(t) - u(t)}
\le C h^2 \Big( |\log(h)| \max_{0\le s \le t} \snorm[2]{u(s)}
 + \Big( \int_0^t \snorm[2]{v(s)}^2\,\dd s \Big)^{\frac{1}{2}}
\Big).
\end{align}
\end{thm}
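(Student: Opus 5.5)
We will estimate the error via the Ritz projection splitting $u_h-u=(u_h-R_hu)+(R_hu-u)=\theta+\rho$, but with the twist that time derivatives of $u$ must be avoided. The term $\rho$ is harmless: by \eqref{chc2:Rherror} with $\beta=2$ we get $\norm{\rho(t)}\le Ch^2\snorm[2]{u(t)}$. For $\theta$ we work with the mild (Duhamel) formulation rather than an energy argument, since the semigroup representation only involves $u$ and $v$, not $\dot u$.

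The plan for $\theta$ is as follows. Write \eqref{A1} as $\dot u+A^2u+Af=0$, i.e.\ $\dot u = -Av$ with $v=Au+f$, and similarly $\dot u_h=-A_hv_h$ with $v_h=A_hu_h+P_hf$. Using the identity $A_hR_h=P_hA$ on $\dot H^2$ (the defining property of $R_h$), we get $A_hR_hu=P_hAu=P_h(v-f)$, so $v_h - R_h v$ can be related to $\theta$. More concretely, I will subtract the equations tested against $S_h$: from the weak forms, $\dot u_h+A_hv_h=0$ and $P_h\dot u + A_hR_hv=0$ (because $\inner{\nabla v}{\nabla \chi}=\inner{\nabla R_hv}{\nabla\chi}$). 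Hence $e:=u_h-P_hu$ satisfies $\dot e + A_h(v_h-R_hv)=0$, and $v_h-R_hv = A_hu_h+P_hf - R_hv = A_h(u_h-R_hu)+P_h v - R_h v$ using $A_hR_hu=P_hAu=P_h(v-f)$. Writing $u_h-R_hu = e + (P_hu-R_hu)$, we arrive at
\[
\dot e + A_h^2 e = -A_h^2(P_h-R_h)u - A_h(P_h-R_h)v,\qquad e(0)=P_hu_0-P_hu_0=0 .
\]
So by Duhamel
\[
e(t) = -\int_0^t A_h^2\ee^{-(t-s)A_h^2}(P_h-R_h)u(s)\,\dd s-\int_0^t A_h\ee^{-(t-s)A_h^2}(P_h-R_h)v(s)\,\dd s .
\]
Note the constant modes cancel since $R_h$ and $P_h$ both preserve means.

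The second integral is handled by a smoothing/duality estimate: $\norm{\int_0^t A_h\ee^{-(t-s)A_h^2}g(s)\,\dd s}^2\le C\int_0^t\norm{g(s)}^2\dd s$, which follows from the spectral representation (for each eigenvalue $\lambda$, $\int_0^t\lambda^2\ee^{-2(t-s)\lambda^2}\dd s\le \tfrac12$, combined with Cauchy--Schwarz in the form of the standard maximal-regularity-type bound $\norm{\int_0^t A_h e^{-(t-s)A_h^2}g\,ds}^2\le \frac12\int_0^t\norm{g}^2ds$). Together with $\norm{(P_h-R_h)v}\le Ch^2\snorm[2]{v}$ this gives the $h^2(\int_0^t\snorm[2]{v}^2)^{1/2}$ term. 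The first integral is the main obstacle: $\norm{A_h^2\ee^{-sA_h^2}}\le Cs^{-1}$ is not integrable at $s=0$. I will split the integral at $t-h^4$ (when $t>h^4$): on $(t-h^4,t)$ use instead $\norm{A_h^2\ee^{-sA_h^2}(P_h-R_h)u}\le \norm{A_h^2}\cdot Ch^2\snorm[2]{u}$? That is too crude; better use $\norm{A_h\ee^{-sA_h^2}}\le Cs^{-1/2}$ together with the inverse estimate $\norm{A_h}\le Ch^{-2}$ from \eqref{x}, giving $\int_{t-h^4}^t Ch^{-2}(t-s)^{-1/2}\,\dd s\, h^2\max\snorm[2]{u}\le Ch^2\max\snorm[2]{u}$. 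On $(0,t-h^4)$ use $C(t-s)^{-1}$ times $Ch^2\max\snorm[2]{u}$, whose integral gives $\log(t/h^4)$; since $\ee^{-ct}$ decay is available in \eqref{eq:analytich} we can bound $\int_{h^4}^\infty s^{-1}\ee^{-cs}\dd s\le C|\log h|$ uniformly in $t$ for $h\le\frac12$. Collecting $\rho$, $e$, and $P_hu-R_hu$ (also $O(h^2\snorm[2]{u})$) yields \eqref{A3}.
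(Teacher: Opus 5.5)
Your proposal is correct and is essentially the paper's proof. With $e=u_h-P_hu$ (the paper's $\theta_u$) you obtain the same error equation $\dot\theta_u+A_h^2\theta_u=A_hP_h\rho_v-A_h^2R_h\rho_u$, since $(P_h-R_h)u=R_h\rho_u$ and $-(P_h-R_h)v=P_h\rho_v$; you then use the same Duhamel representation and the same splitting at distance $h^4$ to produce the factor $|\log h|$.

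There are two minor deviations. For the $v$-term the paper uses an energy argument for $w_h$, testing with $\dot w_h$, where you use eigenmode-wise Cauchy--Schwarz. On the short interval the paper simply uses $\norm{A_h}^2\le Ch^{-4}$ over a length $h^4$, so the bound you dismissed as ``too crude'' in fact works too.
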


\begin{proof}
The weak forms of \eqref{A1} and \eqref{A2} are
\begin{equation}\label{A4}
\begin{aligned}
& \inner{\dot{u}}{\varphi_1}
+\inner{\nabla  v}{\nabla  \varphi_1} = 0 && \forall \varphi_1 \in H^1,\\
& \inner{v}{\varphi_2} - \inner{\nabla   u}{\nabla   \varphi_2} -
\inner{f}{\varphi_2} = 0
&& \forall \varphi_2 \in H^1,\\
& u(0) = u_0,
\end{aligned}
\end{equation}
and
\begin{equation}\label{A5}
\begin{aligned}
& \inner{\dot{u}_{h}}{\varphi_{h,1}} +\inner{\nabla   v_h}{\nabla
  \varphi_{h,1}} = 0
&& \forall \varphi_{h,1} \in S_h,\\
& \inner{v_h}{\varphi_{h,2}} - \inner{\nabla   u_h}{\nabla
  \varphi_{h,2}} - \inner{f}{\varphi_{h,2}} = 0
&& \forall \varphi_{h,2} \in S_h,\\
& u_h(0) = P_h u_0.
\end{aligned}
\end{equation}
Let $P_h$ and $R_h$ be as in \eqref{P_h} and \eqref{R_h} and set
\begin{align}
& e_u = u_h - u = (u_h -P_h u) + (P_h u - u) = \theta_u + \rho_u, \label{A6}\\
& e_v = v_h - v = (v_h -R_h v) + (R_h v - v) = \theta_v + \rho_v \label{A7}.
\end{align}
We want to compute
\begin{equation}\label{Err}
\norm{e_u} \le \norm{\theta_u} + \norm{\rho_u}.
\end{equation}
In \eqref{A4} choose $ \varphi_1 = \varphi_{h,1} $ and $ \varphi_2 =
\varphi_{h,2} $ and subtract the first two equations of \eqref{A4}
from the corresponding equations in \eqref{A5} to get
\begin{equation*}
\begin{aligned}
& \inner{\dot{e}_u}{\varphi_{h,1}} +\inner{\nabla   e_v}{\nabla
  \varphi_{h,1}} = 0
&& \forall \varphi_{h,1} \in S_h,\\
& \inner{e_v}{\varphi_{h,2}} - \inner{\nabla   e_u}{\nabla
  \varphi_{h,2}} = 0
&& \forall \varphi_{h,2} \in S_h.
\end{aligned}
\end{equation*}
Hence, by \eqref{A6} and \eqref{A7},
\begin{equation*}
\begin{aligned}
& \inner{\dot{\theta}_u}{\varphi_{h,1}}
+\inner{\nabla   \theta_v}{\nabla   \varphi_{h,1}}
= -\inner{\dot{\rho}_u}{\varphi_{h,1}} - \inner{\nabla
  \rho_v}{\nabla   \varphi_{h,1}}
&& \forall \varphi_{h,1} \in S_h,\\
& \inner{\theta_v}{\varphi_{h,2}}
- \inner{\nabla   \theta_u}{\nabla   \varphi_{h,2}}
= - \inner{\rho_v}{\varphi_{h,2}}  +  \inner{\nabla   \rho_u}{\nabla
  \varphi_{h,2}}
&& \forall \varphi_{h,2} \in S_h.
\end{aligned}
\end{equation*}
By the definitions of $ P_h $ and $ R_h $ we have
\begin{equation*}
\begin{aligned}
 \inner{\dot{\rho}_u}{\varphi_{h,1}}
&= \inner{P_h \dot{u} - \dot{u}}{\varphi_{h,1}} = 0
&& \forall \varphi_{h,1} \in S_h,\\
 \inner{\nabla   \rho_v}{\nabla  \varphi_{h,1}}
&= \inner{\nabla  R_h v - v}{\nabla   \varphi_{h,1}} = 0
&& \forall \varphi_{h,2} \in S_h,
\end{aligned}
\end{equation*}
so that
\begin{equation*}
\begin{aligned}
& \inner{\dot{\theta}_u}{\varphi_{h,1}}
+\inner{\nabla   \theta_v}{\nabla   \varphi_{h,1}} = 0
&& \forall \varphi_{h,1} \in S_h,\\
& \inner{\theta_v}{\varphi_{h,2}}
- \inner{\nabla   \theta_u}{\nabla   \varphi_{h,2}}
= - \inner{P_h\rho_v}{\varphi_{h,2}}
+  \inner{\nabla   R_h\rho_u}{\nabla \varphi_{h,2}}
&& \forall \varphi_{h,2} \in S_h.
\end{aligned}
\end{equation*}
In the second equation we  set $ \varphi_{h,2} = A_h \varphi_{h,1} $ to get
\begin{equation*}
\inner{\nabla  \theta_v}{\nabla  \varphi_{h,1}}
= \inner{A_h^2 \theta_u}{\varphi_{h,1}}
- \inner{A_h P_h \rho_v}{\varphi_{h,1}}
+\inner{A_h^2 R_h  \rho_u}{\varphi_{h,1}}.
\end{equation*}
Inserting this into the first equation gives
\begin{align*}
\inner{\dot{\theta}_u}{\varphi_{h,1}}
+ \inner{A_h^2 \theta_u}{\varphi_{h,1}}
= \inner{A_h P_h \rho_v}{\varphi_{h,1}} - \inner{A_h^2 R_h  \rho_u}{\varphi_{h,1}},
\end{align*}
so the strong form is
\begin{equation*}
\dot{\theta}_u + A_h^2 \theta_u
= A_h P_h \rho_v - A_h^2 R_h \rho_u,\quad t > 0;
\quad \theta_u(0) = 0,
\end{equation*}
with solution
\begin{align*}
\theta_u(t)
= \int_0^t \ee^{-(t - s)A_h^2}A_h P_h \rho_v(s)\,\dd s
- \int_0^t \ee^{-(t - s)A_h^2}A_h^2 R_h \rho_u(s)\,\dd s.
\end{align*}
Taking norms here gives
\begin{equation}\label{A11}
\begin{split}
\norm{\theta_u(t)}
& \le  \Norm{\int_0^t \ee^{-(t - s)A_h^2}A_h P_h \rho_v(s)\,\dd s}
\\&\quad + \Norm{\int_0^t \ee^{-(t - s)A_h^2}A_h^2 R_h \rho_u(s)\,\dd s}
 = I + II.
\end{split}
\end{equation}
For $I$ we define
\begin{equation*}
w_h (t) = \int_0^t\ee^{-(t - s)A_h^2} P_h \rho_v(s)\,\dd s,
\end{equation*}
which satisfies the equation
\begin{align*}
\dot{w}_h + A_h^2 w_h = P_h \rho_v, \quad t>0;\quad  w_h(0) = 0.
\end{align*}
We multiply by  $ \dot{w}_h $ to get
\begin{equation*}
\norm{\dot{w}_h}^2 + \frac{1}{2} \frac{\dd}{\dd t} \norm{A_h w_h}^2
= \inner{P_h \rho_v}{\dot{w}_h}
\le \norm{\rho_v}\norm{\dot{w}_h}
\le \frac{1}{2}\norm{\rho_v}^2 + \frac{1}{2}\norm{\dot{w}_h}^2,
\end{equation*}
so that
\begin{align*}
\norm{\dot{w}_h}^2 + \frac{\dd}{\dd t} \norm{A_h w_h}^2 \le \norm{\rho_v}^2.
\end{align*}
Integration and ignoring $ \int_0^t \norm{\dot{w}_h(s)}^2\,\dd s $
leads to
\begin{equation*}
\Norm{A_h \int_0^t \ee^{-(t-s)A_h^2}P_h \rho_v(s)\,\dd s}
= \norm{A_h w_h(t)}
\le \Big( \int_0^t \norm{\rho_v(s)}^2\,\dd s \Big)^{\frac{1}{2}},
\end{equation*}
where, from \eqref{chc2:Rherror},
\begin{align*}
\norm{\rho_v} = \norm{(R_h - I)v} \le Ch^2 \snorm[2]{v}.
\end{align*}
Hence,
\begin{equation}\label{AA15}
\Norm{A_h \int_0^t \ee^{-(t-s)A_h^2}P_h \rho_v(s)\,\dd s}
\le Ch^2 \Big( \int_0^t \snorm[2]{v(s)}^2\,\dd s \Big)^{\frac{1}{2}}.
\end{equation}
For the term $II$ we use
\begin{align*}
R_h \rho_u = R_h(P_h u - u) = P_h u - R_h u = P_h(u - R_h u).
\end{align*}
Then
\begin{align*}
&\Norm{\int_0^t A_h^2 \ee^{-(t-s)A_h^2}R_h \rho_u(s)\,\dd s}
\le \int_0^t \norm{A_h^2 \ee^{-(t-s)A_h^2}P_h (u(s) - R_h u(s))}\,\dd s\\
& \qquad \le \int_0^t \norm{A_h^2 \ee^{-(t-s)A_h^2}P_h}\,\dd s
\max_{0\le s \le t}\norm{u(s) - R_h u(s)}.
\end{align*}
Here we use $\norm{A_h} \le Ch^{-2}$ from \eqref{x} and
\eqref{eq:analytich} to get
\begin{equation*}
\begin{split}
& {\int_0^t \norm{A_h^2 \ee^{-(t-s)A_h^2}P_h }\,\dd s}
 = \int_0^{h^4} \norm{A_h}^2\norm{\ee^{-{s}A_h^2}}\,\dd {s}
+ \int_{h^4}^t \norm{A_h^2 \ee^{-{s}A_h^2}}\,\dd s \\
&\qquad \le  Ch^{-4} h^4 + C\int_{h^4}^t {s}^{-1}\ee^{-c{s}}\,\dd {s}
 \le  C(1+\log(1/h))\le C|\log(h)|
\end{split}
\end{equation*}
for $h\in(0,\frac12]$. Hence, by \eqref{chc2:Rherror}, we have
\begin{equation}\label{AA16}
\Norm{\int_0^t A_h^2 \ee^{-(t-s)A_h^2}R_h \rho_u(s)\,\dd s}
\le Ch^2 |\log(h)|\max_{0\le s \le t} \snorm[2]{u(s)}.
\end{equation}
Inserting \eqref{AA15} and \eqref{AA16} into \eqref{A11} gives
\begin{equation}\label{theta}
\norm{\theta_u(t)}
\le Ch^2 \Big\{ \Big( \int_0^t \snorm[2]{v(s)}^2\,\dd s \Big)^{\frac{1}{2}}
	+ |\log(h)|  \max_{0\le s \le t} \snorm[2]{u(s)} \Big\}.
\end{equation}
Finally, by the best approximation property of $P_h$,
\begin{equation}\label{rho}
\norm{\rho_u(t)}
= \norm{P_h u - u}
\le \norm{R_hu - u}
\le Ch^2 \snorm[2]{u(t)}.
\end{equation}
Inserting \eqref{theta} and \eqref{rho} into \eqref{Err} gives the desired
result \eqref{A3}.
\end{proof}

The following regularity estimate for the
linear Cahn-Hilliard equation \eqref{A1} is proved by an elementary
energy argument.

\begin{lem}\label{Alemma1}
Assume that $ u,v $ are weak solutions of \eqref{A1}. Then
\begin{align*}
\snorm[2]{u(t)}^2 + \int_0^t \snorm[2]{v(s)}^2\,\dd s
\le \snorm[2]{u_0}^2 + \int_0^t \snorm[2]{f(s)}^2 \,\dd s.
\end{align*}
\end{lem}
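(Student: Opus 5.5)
We argue by an energy estimate in which the system \eqref{A1} is tested with $A^2u$ and the nonlinearity-free structure is exploited. Eliminating $v$ formally, we have $v = Au + f$, and hence
\begin{equation*}
\dot u + A^2 u + Af = 0, \quad t>0; \quad u(0)=u_0 .
\end{equation*}
Since $A$ annihilates constants, $(I-P)u$ is constant in time and only the $\dot H$-component matters. All seminorms $\snorm[2]{\cdot}$ see only $Pu$, $Pv$, $Pf$. We will carry out the computation for smooth (e.g.\ spectrally truncated, using the eigenbasis $\{\varphi_j\}$) solutions. We then pass to the limit, or equivalently work componentwise in the eigenbasis, which makes the formal manipulations rigorous for weak solutions.

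The key step is to take the inner product of the first equation $\dot u + Av = 0$ with $A^2 u$:
\begin{equation*}
\tfrac12 \tfrac{\dd}{\dd t}\snorm[2]{u}^2 + \inner{A^{\frac12} v}{A^{\frac32} u} = 0 .
\end{equation*}
From the second equation, $Au = v - f$, so $A^{\frac32}u = A^{\frac12}(v-f)$. Then
\begin{equation*}
\inner{A^{\frac12} v}{A^{\frac12}(v-f)} = \snorm[1]{v}^2 - \inner{A^{\frac12}v}{A^{\frac12}f}.
\end{equation*}
This would only give $\snorm[1]{v}$ control, so instead we test with $A^2$ applied to $Av$-level quantities. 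Precisely, we apply $A$ to the equation: $A\dot u + A^2 v = 0$, and take the inner product with $Au = v-f$:
\begin{equation*}
\tfrac12\tfrac{\dd}{\dd t}\norm{Au}^2 + \inner{A^2 v}{v - f} = 0,
\end{equation*}
that is,
\begin{equation*}
\tfrac12\tfrac{\dd}{\dd t}\snorm[2]{u}^2 + \snorm[2]{v}^2 = \inner{Av}{Af} \le \tfrac12\snorm[2]{v}^2 + \tfrac12\snorm[2]{f}^2 .
\end{equation*}
Absorbing the $\snorm[2]{v}^2$ term on the right, multiplying by $2$ and integrating over $(0,t)$ gives
\begin{equation*}
\snorm[2]{u(t)}^2 + \int_0^t \snorm[2]{v}^2\,\dd s \le \snorm[2]{u_0}^2 + \int_0^t \snorm[2]{f}^2\,\dd s,
\end{equation*}
which is the claim.

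The main obstacle is justification rather than the algebra. For weak solutions, $Av$ and $A^2v$ need not lie in $H$ a priori. We handle this by expanding in the eigenbasis: with $u_j = \inner{u}{\varphi_j}$ and similarly $v_j$, $f_j$, the system gives $\dot u_j + \lambda_j v_j = 0$ and $v_j = \lambda_j u_j + f_j$. The identity above then holds mode by mode:
\begin{equation*}
\tfrac12\tfrac{\dd}{\dd t}(\lambda_j^2 u_j^2) + \lambda_j^2 v_j^2 = \lambda_j^2 v_j f_j .
\end{equation*}
Applying Young's inequality per mode, integrating in time and summing over $j\ge1$ by monotone convergence yields the estimate whenever the right-hand side is finite. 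Otherwise there is nothing to prove.
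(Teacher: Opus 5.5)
Your proof is correct and is the ``elementary energy argument'' the paper invokes without giving details: testing $\dot u+Av=0$ with $A^2u$ and substituting $Au=v-f$ gives $\tfrac12\tfrac{\dd}{\dd t}\snorm[2]{u}^2+\snorm[2]{v}^2=\inner{Av}{Af}$, and your mode-by-mode computation in the eigenbasis $\{\varphi_j\}$ is a clean way to make this rigorous for weak solutions. Your abandoned first attempt is in fact the same computation, since $\inner{Av}{A^2u}=\inner{A^{\frac32}v}{A^{\frac32}u}$ rather than $\inner{A^{\frac12}v}{A^{\frac32}u}$, so that detour and its claim that it ``would only give $\snorm[1]{v}$ control'' can simply be deleted.
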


\subsection{Error estimate for the stochastic Cahn-Hilliard equation}
In the next theorem we prove an error estimate for the nonlinear
Cahn-Hilliard-Cook equation.

\begin{thm}\label{thm5.3}
  Assume that $\norm[\HS]{A^{\frac12}Q^{\frac12}}<\infty$ and that
  $X_0$ is $\cF_0$-measurable with values in $H^3$ satisfying
  $\norm[L_2(\Omega,H^1)]{X_0}^2 + \norm[L_4(\Omega,L_4)]{X_0}^4 \le
  \rho$.  Let $T>0$, $\epsilon \in (0,1)$, and let
  $\Omega_\epsilon\subset\Omega$ and $K_T$ be as in Corollary
  \ref{coro2.5}. If $X$ is the weak solution of \eqref{C-H-C-X} and
  $X_h$ is the solution of \eqref{FEC-H-C-X}, then, for
  $h\in(0,\frac12]$,
\begin{align*}
\norm{X_h(t) - X(t)}
\le C(\norm[3]{X_0},\epsilon^{-1}K_T,T )
h^2 \abs{\log(h)},
\quad \text{on $\Omega_\epsilon$, }t \in [0,T].
\end{align*}
\end{thm}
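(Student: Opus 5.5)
We split the error using $X=Y+W_A$ and $X_h=Y_h+W_{A_h}$, where $Y_h=X_h-W_{A_h}$ solves the deterministic (pathwise) problem
\[
\dot Y_h+A_h^2Y_h+A_hP_hf(X_h)=0,\qquad Y_h(0)=P_hX_0 .
\]
We fix $\omega\in\Omega_\epsilon$ and $t\in[0,T]$ and write
\[
X_h-X=(W_{A_h}-W_A)+(Y_h-\tilde Y_h)+(\tilde Y_h-Y).
\]
Here $\tilde Y_h$ is the finite element solution of the \emph{linear} problem \eqref{A2} with the data of \eqref{A1} taken as $f=f(X)$ and $u_0=X_0$, so that $u=Y$, $v=AY+f(X)$.

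The stochastic convolution term is handled by Theorem \ref{Thm3.2}, which gives a mean-square bound $Ch^2|\log h|$. A pathwise bound on $\Omega_\epsilon$ is needed, however, and uniformly in $t$. The plan is as follows.
\begin{itemize}
\item Apply Chebyshev's inequality at each $t$, which yields a set depending on $t$. To get uniformity in $t$, either enlarge $K_T$ by a sup-in-time estimate of $h^{-2}|\log h|^{-1}\norm{W_{A_h}-W_A}$ (via a factorization argument), or simply shrink $\Omega_\epsilon$ accordingly.
\item In practice we redefine $\Omega_\epsilon$, as in Corollary \ref{coro2.5}, to include the event $\sup_t\norm{W_{A_h}(t)-W_A(t)}\le \epsilon^{-1}K_T h^2|\log h|$. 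Obtaining this is the main technical obstacle: it requires a maximal inequality for the stochastic convolution error, not just the fixed-$t$ estimate.
\end{itemize}

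The linear term $\tilde Y_h-Y$ is handled by Theorem \ref{Athm1}. We need $\max_s\snorm[2]{Y(s)}$, which Theorem \ref{Ynorm3} bounds by $C(\norm[3]{X_0},\epsilon^{-1}K_T,T)$. We also need $\int_0^t\snorm[2]{v}^2$. By Lemma \ref{Alemma1} this is bounded by
\[
\snorm[2]{X_0}^2+\int_0^t\snorm[2]{f(X(s))}^2\,ds ,
\]
and \eqref{Deltaf} together with \eqref{nxh3} and \eqref{corothree} bounds $\snorm[2]{f(X)}=\norm{Af(X)}$ by a constant. Hence $\norm{\tilde Y_h-Y}\le C h^2|\log h|$.

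For the nonlinear discrete term $e=Y_h-\tilde Y_h\in S_h$, we have
\[
\dot e+A_h^2e=-A_hP_h\big(f(X_h)-f(X)\big),\qquad e(0)=0,
\]
so
\[
e(t)=-\int_0^t A_h^{3/2}\ee^{-(t-s)A_h^2}A_h^{1/2}P_h\big(f(X_h)-f(X)\big)\,ds.
\]
Using \eqref{eq:analytich} with exponent $3/4$ in time, and \eqref{Deltauv-1} in the form $\norm{A_h^{1/2}P_h\cdot}\le \norm{A_h^{-1/2}P\,\cdot}$ after commuting appropriately, we work with $A_h^{-1/2}$ placed correctly, i.e.\ we write $A_h P_h = A_h^{3/2}\cdot A_h^{-1/2}P_hP$. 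Together with the $H^1$ bounds \eqref{corothree} on $\Omega_\epsilon$, this gives
\[
\norm{e(t)}\le C\int_0^t (t-s)^{-3/4}\norm{X_h(s)-X(s)}\,ds .
\]
We then bound $\norm{X_h-X}\le \norm{e}+Ch^2|\log h|$, using the two previous bounds, and apply the generalized Gronwall Lemma \ref{GWlem} with $\beta=\frac14$. This gives $\norm{e(t)}\le Ch^2|\log h|$, with the constant depending on $\norm[3]{X_0},\epsilon^{-1}K_T,T$, which completes the estimate.
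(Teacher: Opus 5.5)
Apart from the stochastic convolution term, your argument is the paper's proof. You use the same three-term splitting (your $Y_h$ and $\tilde Y_h$ are the paper's $Z_h$ and $Y_h$). For the linear part you use Theorem~\ref{Athm1} with Lemma~\ref{Alemma1}, \eqref{Deltaf} and Theorem~\ref{Ynorm3}. For the nonlinear part you use \eqref{Deltauv-1}, \eqref{eq:analytich}, \eqref{corothree} and Lemma~\ref{GWlem} with $\alpha=1$, $\beta=\frac14$. (Your display for $e(t)$ should contain $A_h^{-1/2}P_hP$, not $A_h^{1/2}P_h$, and the inequality $\norm{A_h^{1/2}P_h\,\cdot\,}\le\norm{A_h^{-1/2}P\,\cdot\,}$ should be deleted. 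The factorization $A_hP_h=A_h^{3/2}A_h^{-1/2}P_hP$ you end with is the right one.)

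The genuine gap is the term $W_{A_h}-W_A$, which you leave open. The Gronwall step needs a bound for $\norm{W_{A_h}(s)-W_A(s)}$ valid for all $s\in[0,t]$ on one fixed set. You rightly note that Chebyshev applied to Theorem~\ref{Thm3.2} at a single $t$ only controls a $t$-dependent exceptional set. But your remedy, adding the event $\{\sup_t\norm{W_{A_h}(t)-W_A(t)}\le\epsilon^{-1}K_Th^2|\log h|\}$ to $\Omega_\epsilon$, presupposes a maximal estimate $\IE[\sup_{t\le T}\norm{W_{A_h}(t)-W_A(t)}]\le Ch^2|\log h|$ that you do not prove. The paper itself does only the fixed-$t$ Chebyshev step and then uses $\max_{0\le s\le T}$ in the Gronwall argument without comment, so your objection applies to it too; but a proof must supply the missing estimate.

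One route that avoids a maximal inequality is as follows. Since $W_{A_h}(s)-W_A(s)$ is a centered Gaussian $H$-valued variable, Theorem~\ref{Thm3.2} gives $\IE[\norm{W_{A_h}(s)-W_A(s)}^p]\le C_p(h^2|\log h|)^p$ for every $p<\infty$. Use H\"older's inequality with $p>4$, so that $(t-s)^{-3q/4}$ with $q=p/(p-1)$ is integrable. Then $G=\sup_{t\le T}\int_0^t(t-s)^{-3/4}\norm{W_{A_h}(s)-W_A(s)}\,\dd s\le C_T\big(\int_0^T\norm{W_{A_h}(s)-W_A(s)}^p\,\dd s\big)^{1/p}$. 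Hence $\IE[G^p]\le C(h^2|\log h|)^p$, and Chebyshev gives $G\le C\epsilon^{-1/p}h^2|\log h|$ off a set of probability at most $\epsilon$. This $G$ is exactly the forcing term the Gronwall argument needs, so $\norm{e(t)}\le Ch^2|\log h|$ for all $t\in[0,T]$ on that set intersected with $\Omega_\epsilon$. The remaining term $\norm{W_{A_h}(t)-W_A(t)}$ in the final bound is then handled by Chebyshev at the fixed $t$. This gives the estimate off a $t$-dependent set of probability $O(\epsilon)$, which suffices for Theorem~\ref{lastthm}. A single set serving all $t\in[0,T]$ would still require a genuine maximal inequality for $W_{A_h}-W_A$.
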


The constant $C(\norm[3]{X_0},\epsilon^{-1}K_T,T )$ grows rapidly
with $\epsilon^{-1}K_T$ and $T$ due to the use of Gronwall's lemma in
the proof.

\begin{proof}
Let $\omega\in\Omega_\epsilon$ be fixed.
Set
\begin{equation}\label{X(t)}
X(t) = Y(t) + W_A(t),
\end{equation}
where $W_A(t)$ is the stochastic convolution \eqref{convA}
and $ Y(t) $ is the weak solution \eqref{mildY} of
\eqref{tran-Cahn-Hillx}.  Also set
 \begin{equation}\label{X_h(t)}
 X_h(t) = Z_h(t) + W_{A_h}(t),
\end{equation}
where $ W_{A_h}(t) $ is the stochastic convolution \eqref{convAh}
and
\begin{equation}\label{mildYhat}
 Z_h(t) = \ee^{-tA_h^2}P_h X_0 - \int_0^t \ee^{-(t - s)A_h^2}A_hP_hf(X_h(s))\,\dd s.
\end{equation}
Finally, let
\begin{equation}\label{Y_h}
Y_h(t) = \ee^{-tA_h^2} P_h X_0 - \int_0^t \ee^{-(t - s)A_h^2} A_h P_h f(X(s))\,\dd s.
\end{equation}
We subtract \eqref{X(t)} from \eqref{X_h(t)}
and take norms,
\begin{equation}\label{X_h}
\norm{X_h - X}
\le \norm{W_{A_h} - W_A} + \norm{Y_h - Y} + \norm{Z_h - Y_h}.
\end{equation}
We must compute the three norms on the right-hand side.

First we compute $\norm{W_{A_h}(t) - W_A(t)}$.  Since
$\norm[\HS]{A^{\frac{1}{2}}Q^{\frac{1}{2}} } < \infty$, we have that
$\norm[\HS]{Q^{\frac{1}{2}}} < \infty$, see \eqref{KQ}, and hence, by Theorem
\ref{Thm3.2} and Chebyshev's inequality, we get
\begin{align*}
\norm{W_{A_h}(t) - W_A(t)}
& \le \epsilon^{-\frac{1}{2}}
\big(\IE[\norm{W_{A_h}(t) - W_A(t)}^2]\big)^{\frac{1}{2}}\\
& \le \epsilon^{-\frac{1}{2}} C h^2  \abs{\log(h)}
\norm[\HS]{Q^{\frac{1}{2}}}
\le C(\epsilon^{-1}K_Q)^{\frac12}h^2  \abs{\log(h)},
\end{align*}
where $K_Q$ is as in Theorem \ref{lyap-thm}.
Since $K_Q\le K_T$, we conclude
\begin{equation}\label{WAh_WA}
\norm{W_{A_h}(t) - W_A(t)}
\le C(\epsilon^{-1}K_T)^{\frac12} h^2 \abs{\log(h)}.
\end{equation}

Now we consider $ \norm{Y_h(t) - Y(t)}$ and use Theorem \ref{Athm1}
to get
\begin{equation}\label{P}
\norm{Y_h(t) - Y(t)}
\le Ch^2 \Big\{ |\log(h)|\max_{0\le s \le t} \snorm[2]{Y(s)}
+ \Big(\int_0^t \snorm[2]{V(s)}^2\,\dd s\Big)^{\frac12}\Big\},
\end{equation}
where $ Y(t) $ and $ V(t) $ are the solutions of
\begin{equation*}
\dot{Y} + AV = 0, \ 
 V = AY + f(X),\ t\in(0,T]; \  Y(0) = X_0.
\end{equation*}
By using Lemma \ref{Alemma1}, \eqref{Deltaf}, and \eqref{corofour}, we get
\begin{align*}
\int_0^t \snorm[2]{V(s)}^2 \,\dd s
&\le \snorm[2]{X_0}^2 + \int_0^t \snorm[2]{f(X(s))}^2 \,\dd s
\\&
\le \norm[2]{X_0}^2 + C \int_0^t (1+\norm[1]{X(s)}^2)\norm[3]{X(s)}\,\dd s
\\&
\le  \norm[3]{X_0}^2 + CT\big(1+ (\epsilon^{-1}K_T)^{\frac{3}{2}})\big).
\end{align*}
Therefore,
\begin{equation}\label{intVt}
\int_0^t \snorm[2]{V(s)}^2 \,\dd s
\le C(\norm[3]{X_0},\epsilon^{-1}K_T,T ).
\end{equation}
Now we bound $\snorm[2]{Y(t)}$.
By Theorem \ref{Ynorm3} we have
\begin{equation}\label{Y2}
\snorm[2]{Y(t)} \le \norm[3]{Y(t)} \le C(\norm[3]{X_0}, \epsilon^{-1} K_T,T).
\end{equation}
Using \eqref{intVt} and \eqref{Y2} in \eqref{P} gives
\begin{equation}\label{Y_h(t)}
\norm{Y_h(t) - Y(t)}
\le C(\norm[3]{X_0}, \epsilon^{-1} K_T,T) h^2 \abs{\log(h)}.
\end{equation}

Finally we compute $ \norm{e_h(t)}=\norm{Z_h(t) - Y_h(t)}$. By
subtraction of \eqref{mildYhat} and \eqref{Y_h}, we obtain
\begin{align*}
\norm{e_h(t)}
& \le \int _0^t \norm{\ee^{-(t-s)A_h^2} A_h P_hP(f(X_h(s)) - f(X(s))}\,\dd s\\
& \le \int _0^t \norm{A_h^{\frac{3}{2}} \ee^{-(t-s)A_h^2}P_h} \norm{
  A_h^{-\frac{1}{2}} P(f(X_h(s)) - f(X(s))}\,\dd s,
\end{align*}
since the constant eigenmodes cancel (cf.~\eqref{chc2:cancel}).
Using \eqref{Deltauv-1} and \eqref{eq:analytich} gives
\begin{equation*}
\norm{e_h(t)}
\le C \int_0^t (t - s)^{-\frac{3}{4}}\big(1+\norm[1]{X_h(s)}^2
+ \norm[1]{X(s)}^2\big)\norm{X_h(s) - X(s)}\,\dd s.
\end{equation*}
By Corollary \ref{coro2.5} we have
\begin{align*}
 \norm{e_h(t)}
& \le C \int_0^t (t - s)^{-\frac{3}{4}} \big( 1 + \epsilon^{-1}K_T\big)
	\big( \norm{W_{A_h}(s) - W_A(s)}
\\& \quad
+ \norm{Y_h(s) - Y(s)}
+ \norm{e_h(s)} \big)\,\dd s\\
& \le C \big( 1 + \epsilon^{-1}K_T\big) T^{\frac14}
	\max_{0\le s \le T}\big( \norm{W_{A_h}(s) - W_A(s)} + \norm{Y_h(s) - Y(s)} \big)\\
& \quad	+ C \big( 1 + \epsilon^{-1}K_T\big) \int_0^t (t-s)^{-\frac{3}{4}} \norm{e_h(s)}\,\dd s.
\end{align*}
We apply Gronwall's Lemma \ref{GWlem} with $ \alpha = 1,\,\beta = \frac{1}{4}$ and
\begin{align*}
& A = C \big(1+\epsilon^{-1}K_T\big) T^{\frac{1}{4}}
\max_{0\le s \le T}\big( \norm{W_{A_h}(s) - W_A(s)} + \norm{Y_h(s) - Y(s)} \big),\\
& B = C \big(1+\epsilon^{-1}K_T \big),
\end{align*}
to get
\begin{equation}\label{Yhat-Y}
\norm{Z_h(t) - Y_h(t)}=\norm{e_h(t)} \le A C(B,T), \quad t\in[0,T].
\end{equation}
But we already obtained bounds for $ \norm{W_{A_h}(t) - W_A(t)} $ and
$ \norm{Y_h(t) - Y(t)} $ in \eqref{WAh_WA} and \eqref{Y_h(t)}. By
inserting these and \eqref{Yhat-Y} into \eqref{X_h} we get the
desired result.
\end{proof}

Since we have regularity of order $3$ on $\Omega_\epsilon$, it
  would be possible to prove convergence of order $3$ for piecewise
  quadratic finite elements.  We do not find this worth the extra
  effort.

We finally show that $X_h$ converges strongly to $X$. More precisely,
we show that $X_h(t) \to X(t)$ in $L_2(\Omega,H)$ uniformly on $[0,T]$
as $ h \to 0$.

\begin{thm}\label{lastthm}
  Assume that $\norm[\HS]{A^{\frac12}Q^{\frac12}}<\infty$ and that
  $X_0$ is $\cF_0$-measurable with values in $H^3$ satisfying
  $\norm[L_2(\Omega,H^1)]{X_0}^2 + \norm[L_4(\Omega,L_4)]{X_0}^4 \le
  \rho$ for some $\rho$.  If $X$ is the weak solution of
  \eqref{C-H-C-X} and $X_h$ is the solution of \eqref{FEC-H-C-X}, then
\begin{equation*}
\max_{t \in [0,T]}
\big( \IE[\norm{X_h(t) - X(t)}^2]\big)^{\frac{1}{2}}
\to 0
\quad \text{as}\ h \to 0.
\end{equation*}
\end{thm}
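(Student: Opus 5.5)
We split the expectation over $\Omega_\epsilon$ and its complement. On the good set Theorem \ref{thm5.3} gives the rate $h^2|\log h|$; the complement is controlled by uniform moment bounds combined with Cauchy--Schwarz. Fix $T>0$ and $\epsilon\in(0,1)$, and let $\Omega_\epsilon$ and $K_T$ be as in Corollary \ref{coro2.5}. For every $t\in[0,T]$ we write
\begin{equation*}
\IE\big[\norm{X_h(t)-X(t)}^2\big]
= \IE\big[\chi_{\Omega_\epsilon}\norm{X_h(t)-X(t)}^2\big]
+ \IE\big[\chi_{\Omega\setminus\Omega_\epsilon}\norm{X_h(t)-X(t)}^2\big].
\end{equation*}
Theorem \ref{thm5.3} bounds the integrand of the first term pointwise on $\Omega_\epsilon$ by $C(\norm[3]{X_0},\epsilon^{-1}K_T,T)^2h^4|\log h|^2$. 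The constant depends on $\omega$ through $\norm[3]{X_0}$, and $X_0$ is only assumed to take values in $H^3$; no moment bound in $H^3$ is given. I would handle this by shrinking the good set further to $\Omega_\epsilon' = \Omega_\epsilon\cap\{\norm[3]{X_0}\le M_\epsilon\}$. Since $\norm[3]{X_0}<\infty$ almost surely, we can choose $M_\epsilon$ so that $\IP(\Omega\setminus\Omega_\epsilon')\le 2\epsilon$. On $\Omega'_\epsilon$ the constant is then deterministic, and assuming it is monotone in its first argument (which is visible from the proofs), the first term is at most $C_\epsilon h^4|\log h|^2$, uniformly in $t\in[0,T]$.

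The second term is the main obstacle, because $X$ and $X_h$ are not bounded on the bad set. I would apply Cauchy--Schwarz:
\begin{equation*}
\IE\big[\chi_{\Omega\setminus\Omega'_\epsilon}\norm{X_h(t)-X(t)}^2\big]
\le \IP(\Omega\setminus\Omega'_\epsilon)^{\frac12}
\big(\IE\big[\norm{X_h(t)-X(t)}^4\big]\big)^{\frac12}
\le (2\epsilon)^{\frac12}\, C\Big(\IE\big[\norm{X_h(t)}^4\big]+\IE\big[\norm{X(t)}^4\big]\Big)^{\frac12}.
\end{equation*}
Because $\cD$ is bounded, $\norm{u}^4\le C\norm[L_4]{u}^4$. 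By \eqref{eq:2} and \eqref{eq:3}, $\IE[\norm[L_4]{X(t)}^4]$ and $\IE[\norm[L_4]{X_h(t)}^4]$ are both at most $K_T$, uniformly in $h$ and $t\in[0,T]$. The second term is therefore at most $C\epsilon^{1/2}K_T^{1/2}$, and $K_T$ does not depend on $h$ or $\epsilon$.

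Combining the two estimates gives
\begin{equation*}
\max_{t\in[0,T]}\IE\big[\norm{X_h(t)-X(t)}^2\big]\le C_\epsilon h^4|\log h|^2 + C\epsilon^{\frac12}K_T^{\frac12}.
\end{equation*}
We take the $\limsup$ as $h\to0$ with $\epsilon$ fixed, so the first term vanishes. Letting $\epsilon\to0$ then shows the limit is $0$. The delicate points are, first, that the Chebyshev sets $\Omega_\epsilon$ do not depend on $h$. This holds if $\Omega_\epsilon$ is built from the $X$-bounds and $W_A$ only, or if the $h$-dependence of the $X_h$ part is handled uniformly. If a fixed $h$-independent set cannot be arranged, the argument still goes through: Chebyshev bounds the measure of the bad set by roughly $3\epsilon$ for each $h$, and the Cauchy--Schwarz step only uses that measure together with the uniform fourth moments. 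Second, the $\omega$-dependence of the constant through $X_0$ has to be removed, which is done by the truncation above.
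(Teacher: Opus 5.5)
Your proposal is correct and is essentially the paper's proof. You split over $\Omega_\epsilon$ and its complement, use Theorem~\ref{thm5.3} on the good set, and apply Cauchy--Schwarz with the $h$-uniform fourth-moment bounds from Corollary~\ref{coro2.5} on the bad set, then let $h\to0$ and $\epsilon\to0$; the paper instead balances $\epsilon$ against $h$, which amounts to the same thing.

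Two of your refinements tighten points the paper passes over silently. First, the truncation to $\{\norm[3]{X_0}\le M_\epsilon\}$ matters because the paper drops the random $\norm[3]{X_0}$-dependence of the constant from Theorem~\ref{thm5.3}. Second, your remark that only an $h$-uniform bound on $\IP(\Omega\setminus\Omega_\epsilon)$ is needed matters because the paper's $\Omega_\epsilon$ is built from $X_h$ and so depends on $h$.
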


\begin{proof}
From Theorem \ref{lyap-thm} it follows that
\begin{align} \label{eq:star}
\IE\big[\norm[L_4]{X(t)}^4 \big] \le K_T,\quad
 \IE\big[\norm[L_4]{X_h(t)}^4 \big] \le K_T,\quad t \in[0,T],
\end{align}
with $K_T$ as in Corollary \ref{coro2.5}. Let $\epsilon\in(0,1)$ and
let $\Omega_\epsilon$ be as in Corollary \ref{coro2.5}.  Then
\begin{equation*}
\begin{split}
\IE\big[ \norm{X_h(t) - X(t)}^2 \big]
&\le  \int_{\Omega_\epsilon} \norm{X_h(t) - X(t)}^2 \,\dd \IP \\
&\quad + 2\int_{\Omega_\epsilon^c} \big( \norm{X_h(t)}^2 + \norm{X(t)}^2\big) \,\dd \IP.
\end{split}
\end{equation*}
Here, by H\"older's inequality and \eqref{eq:star}, we have
\begin{equation*}
\begin{split}
\int_{\Omega_\epsilon^c} \norm{X(t)}^2\,\dd \IP
& \le \Big( \int_{\Omega_\epsilon^c} 1^2\,\dd \IP \Big)^{\frac{1}{2}}
\Big( \int_{\Omega_\epsilon^c} \norm[L_4]{X(t)}^4\,\dd \IP \Big)^{\frac{1}{2}}\\
& \le \epsilon^{\frac{1}{2}} \big( \IE\big[ \norm[L_4]{X(t)}^4
  \big] \big)^{\frac{1}{2}}
\le \epsilon^{\frac{1}{2}} K_T^{\frac{1}{2}}
\end{split}
\end{equation*}
and similarly for $X_h$.  Therefore, by Theorem \ref{thm5.3},
\begin{align*}
\max_{t \in [0,T]}\big( \IE\left [\norm{X_h(t) - X(t)}^2\right ] \big)^{\frac{1}{2}}
\le C(\epsilon^{-1}K_T,T) h^2 \abs{\log (h)} + CK_T^{\frac{1}{4}} \epsilon^{\frac{1}{4}}.
\end{align*}
Since $\frac{\epsilon^{\frac{1}{4}}}{C(\epsilon^{-1}K_T,T)} \to 0$
monotonically as $\epsilon \to 0$, we may choose $\epsilon$ depending
on $h$, such that the two terms are equal.
\end{proof}

Since $ C(\epsilon^{-1}K_T,T) $ grows rapidly with $\epsilon^{-1}$, it
is not possible to obtain a rate of convergence from this proof.


\providecommand{\bysame}{\leavevmode\hbox to3em{\hrulefill}\thinspace}
\providecommand{\MR}{\relax\ifhmode\unskip\space\fi MR }
\providecommand{\MRhref}[2]{%
  \href{http://www.ams.org/mathscinet-getitem?mr=#1}{#2}
}
\providecommand{\href}[2]{#2}

\end{document}